\DeclareMathOperator{\ord}{ord}
\DeclareMathOperator{\supp}{supp}
\def\ba#1\ea{\begin{align}#1\end{align}}
\newcommand{\be}{\begin{equation}}
\newcommand{\ber}{\begin{eqnarray}}
\newcommand{\ee}{\end{equation}}
\newcommand{\eer}{\end{eqnarray}}
\newcommand{\Fc}{\mathcal F}
\newcommand{\h}{\mathsf h}
\newcommand{\la}{\langle}
\newcommand{\mdots}{\cdot\ldots\cdot}
\newcommand{\N}{\mathbb N}
\newcommand{\nn}{\nonumber}
\newcommand{\ra}{\rangle}
\newcommand{\Sum}[2]{\sum\limits_{#1}^{#2}}
\newcommand{\Summ}[1]{\sum\limits_{#1}}
\newcommand{\und}{\;\mbox{ and } \;}
\newcommand{\vp}{\mathsf v}
\newcommand{\Z}{\mathbb Z}
\let\orgautoref\autoref
\renewcommand{\autoref}
        {%
\def\chapterautorefname{Chapter}%
\def\corollaryautorefname{Corollary}%
\def\definitionautorefname{Definition}%
\def\lemmaautorefname{Lemma}%
\def\propositionautorefname{Proposition}%
\def\questionautorefname{Question}%
\def\remarkautorefname{Remark}%
\def\sectionautorefname{Section}%
         \orgautoref}
\newtheorem{theorem}{Theorem}[section]
\newaliascnt{lemma}{theorem}
\newtheorem{lemma}[lemma]{Lemma}
\newaliascnt{corollary}{theorem}
\newtheorem{corollary}[corollary]{Corollary}
\newaliascnt{proposition}{theorem}
\theoremstyle{definition}
\newaliascnt{definition}{theorem}
\newaliascnt{remark}{theorem}
\newaliascnt{notation}{theorem}
\newaliascnt{situation}{theorem}
\newaliascnt{example}{theorem}
\newaliascnt{question}{theorem}
\newaliascnt{conjecture}{theorem}
\numberwithin{equation}{section}
\thanks{Supported in part by FWF, the Austrian Science Fund, project number P21576-N18}
\subjclass[2010]{11B50, 11B75, 11D79, 11P70}
\title[]{Arithmetic-Progression-Weighted Subsequence Sums}
\author[D. J. Grynkiewicz, A. Philipp, and V. Ponomarenko]{David J. Grynkiewicz and Andreas Philipp and Vadim Ponomarenko}
\address{University of Graz\\Institute for Mathematics and Scientific Computing\\Heinrichstrasse 36\\8010 Graz\\Austria}
\email{diambri@hotmail.com, andreas.philipp@uni-graz.at, vadim123@gmail.com}
\begin{document}

\begin{abstract}
Let $G$ be an abelian group, let $S$ be a sequence of terms $s_1,s_2,\ldots,s_{n}\in G$  not all contained in a coset of a proper subgroup of $G$, and let $W$ be a sequence of $n$ consecutive integers. Let $$W\odot S=\{w_1s_1+\ldots+w_ns_n:\;w_i\mbox{ a term of } W,\, w_i\neq w_j\mbox{ for } i\neq j\},$$  which is a particular kind of weighted restricted sumset. We show that  $|W\odot S|\geq \min\{|G|-1,\,n\}$, that $W\odot S=G$ if $n\geq |G|+1$, and also characterize all sequences $S$ of length $|G|$ with $W\odot S\neq G$. This result then allows us to characterize when a linear equation $$a_1x_1+\ldots+a_rx_r\equiv \alpha\mod n,$$ where $\alpha,a_1,\ldots, a_r\in \Z$ are given, has a solution $(x_1,\ldots,x_r)\in \Z^r$ modulo $n$ with all $x_i$ distinct modulo $n$.  As a second simple corollary, we also show that there are maximal length minimal zero-sum sequences over a rank $2$ finite abelian group $G\cong C_{n_1}\oplus C_{n_2}$ (where $n_1\mid n_2$ and $n_2\geq 3$) having $k$ distinct terms, for any $k\in [3,\min\{n_1+1,\,\exp(G)\}]$. Indeed, apart from a few simple restrictions, any pattern of multiplicities is realizable for such a maximal length minimal zero-sum sequence.
\end{abstract}

\maketitle

\section{Introduction}

Let $G$ be an abelian group and let $S$  be a sequence of terms from $G$.  It is a classical problem in additive number theory to study which elements from $G$ can be represented as a sum of some subsequence  of $S$ (possibly of predetermined length).
To make this formal, we let $\Sigma(S)$ denote the set of all elements from $G$ that are the sum of terms from some non-empty subsequence of $S$, and we let $\Sigma_n(S)$, where $n\geq 0$ is an integer, denote the set of all elements from $G$ that are the sum of terms from some $n$-term subsequence of $S$.
Throughout this paper, we use the multiplicative standards from \cite{alfredbook} \cite{Alfred-BCN-notes} \cite{GaoGer-survey} for subsequence sum notation, with all formal definitions given in the next section and notation in the introduction kept to a minimum.

The Davenport constant $\mathsf D(G)$, which  is the minimal length of a subsequence from $G$ that guarantees a subsequence with sum zero, i.e., that $0\in \Sigma(S)$, is perhaps the most famous and well-studied subsequence sum question \cite{Rogers-Davenport} \cite{alfredbook}. Other examples include the Erd\H{o}s-Ginzburg-Ziv Theorem \cite{egz} \cite{alfredbook} \cite{natbook}, which states that a sequence $S$ with length $|S|\geq 2|G|-1$ guarantees $0\in \Sigma_{|G|}(S)$, the now proven Kemnitz Conjecture \cite{kemnitz} \cite{alfredbook}, which states that $0\in \Sigma_n(S)$ for $|S|\geq 4n-3$ when $G\cong C_n\oplus C_n$ is a rank $2$ finite abelian group, and the Olson constant, which is analogous to the Davenport Constant only for sets instead of sequences \cite{olsonconstant2} \cite{olsonconstant} \cite{olsonsd}. Related to the Olson Constant is the Critical Number, which is the minimal cardinality of a subset $A$ of $G$ needed to guarantee that \emph{every} element of $G$ can be represented as a sum of distinct elements from $A$ \cite{crit-number}, i.e., that $\Sigma(A)=G$. See \cite{ham-conj} \cite{devos-quad-bound} \cite{vu-complete}  for a handful of more recent results giving bounds for the number of elements representable as a subsequence sum of $S$.

All of the above concerns ordinary subsequence sum questions. Since the establishment of Caro's conjectured weighted Erd\H{o}s-Ginzburg-Ziv Theorem \cite{wegz}, there has been considerable renewed interest to consider various weighted subsequence sum questions \cite{weight-Gao-cyclic} \cite{xia} \cite{thanga-paper} \cite{ordaz-quiroz-cyliccase}
\cite{luca} \cite{hamweightsrelprime} \cite{hamweightegzgroup} \cite{ham-D_A-paper}
\cite{oscar-weighted-projectI} \cite{zhuang-collab} \cite{griffiths} \cite{gao-wegz-partialcase} \cite{adhi0} \cite{adhi1} \cite{adhi2} \cite{adhi3} \cite{adhi4}.
The basic idea is that given a sequence $S$ of terms from an abelian group and a sequence $W$ of integers (or, in the most general form, a sequence of homomorphisms between $G$ and another abelian group $G'$ \cite{homo-weight}), one can instead consider which elements can be represented in the form $w_1s_1+\ldots+w_ns_n$ with the $w_i$ and $s_i$ being the terms of some subsequence from $W$ and $S$, respectively. In this way, the sequence $W$ is viewed as  providing a list of potential weights, and one wishes to know which elements can be represented as a $W$-weighted subsequence sum rather than an ordinary subsequence sum, which is just the case when all terms in the weight sequence $W$ are equal to $1$.
Formally, for a sequence $W=w_1\cdot\ldots\cdot w_n$ of integers $w_i\in \Z$ and an equal length sequence $S=s_1\cdot\ldots\cdot s_n$ with terms $s_i\in G$, we let $$W\odot S=\{w_{\tau(1)}g_1+\ldots+w_{\tau(n)}g_n:\; \tau\mbox{ a permuation of } 1,2,\ldots,n\}.$$ With this notation, the weighted Erd\H{o}s-Ginzburg-Ziv Theorem says that if $W$ is any zero-sum modulo $|G|$ sequence of integers and $S$ is a sequence of terms from $G$ with length $|S|\geq 2|G|-1$, then $S$ has a $|G|$-term subsequence $S'$ with $0\in W\odot S'$. It is still an open conjecture of Bialostocki that the weaker hypothesis $|S|=|G|$ with $S$ zero-sum is enough to guarantee $0\in W\odot S$ when $|G|$ is even \cite{bialostocki} \cite{ap-related}.

If $n=|S|\leq |W|$ and all terms of $W$ are distinct (as will be the case in this paper), so that one may associate $W$ with the set $A:=\supp(W)=\{w_i:\; w_i\mbox{ a term of } W\}$, then $$W\odot S=\{w_1s_1+\ldots +w_ns_n:\,w_i\in A,\,w_i\neq w_j\mbox{ for } i\neq j\}.$$  When all $s_i=1$, then this is precisely the restricted sumset $$A\hat{+}\ldots \hat{+}A=\{a_1+\ldots+a_n:\,a_i\in A,\,a_i\neq a_j\mbox{ for }i\neq j\},$$ which has been extensively studied; see for instance \cite{RS-1} \cite{RS-2} \cite{RS-3} \cite{RS-4} \cite{RS-5} \cite{RS-6}. Thus, for such $W$, studying $W\odot S$  is the same as studying a particular weighted restricted sumset question. In the extreme case when $|A|=n$, there is only one possible element from the restricted sumset $A\hat{+}\ldots \hat{+}A$. However, once the $s_i$ are allowed to take on more general values, the study of such weighted restricted sumsets $W\odot S$ quickly becomes more complicated.

Much of the initial attention regarding weighted subsequence sum problems remained on analogs of the Davenport Constant and Erd\H{o}s-Ginzburg-Ziv Theorem, often providing results valid when both sequences $W$ and $S$ are arbitrary, the idea being that restricting such results to the case when $W$ is the constant $1$ sequence gives an extension of more classical subsequence sum questions. The weighted Erd\H{o}s-Ginzburg-Ziv Theorem mentioned above gives one such example. However, there is a very  natural non-constant weight sequence that has not yet been much studied: namely, one can consider $W$-weighted subsequence sums of $S$ when $W$ is an arithmetic progression of integers.
The focus of this paper is to investigate such weighted subsequence sums. In particular, since the terms of $W$ are generally all distinct, this is also a particular type of weighted restricted sumset question as discussed above.

Indeed, the main goal is to show that $|G|+1$ is the minimal length of a sequence $S$ from a finite abelian group $G$ needed to guarantee that every element of $G$ is representable as a $W$-weighted subsequence sum, where $W$ is an arithmetic progression of $|S|$ consecutive integers (provided the terms of $S$ do not all come from a coset of a proper subgroup, which is easily seen to be a necessary condition for $W\odot S=G$ to hold). Moreover, we also characterize the structure of those sequences of length one less which do not realize every element of $G$  as a $W$-weighted subsequence sum and  give a lower bound for $|W\odot S|$ in terms of $|S|$, which, at least in rather limited special cases, is tight (simply consider $S=0^{|S|-1}g$ with $g$ a generator of $G$). In the notation of the following section, our main result is as follows. It is worth noting that \autoref{main} contains, as a very special case, the main result from \cite{ap-related}, which was devoted to proving the aforementioned conjecture of Bialostocki in the case when the weight sequence is an arithmetic progression of even difference.

\begin{theorem}
\label{main}
Let $G$ be a finite abelian group,
let $S$ be a sequence of terms from $G$ not all contained in a coset of a proper subgroup, and let $W$ be a sequence of $|S|$ consecutive integers.
\begin{itemize}

\item $|W\odot S|\geq \min\{|G|-1,\,|S|\}$.
\item If $|S|\geq |G|+1$, then $W\odot S=G$. Indeed, $W'\odot S'=G$ for some subsequence $S'\mid S$ with $|S'|=|G|$, where $W'=(0)(1)\cdot\ldots\cdot (|G|-1)\in \Fc(\Z)$.
\item If $|S|=|G|$ and $W\odot S\neq G$, then $|G|\geq 3$ and  either
\begin{itemize}
\item[(i)] $G\cong C_2\oplus C_2$, $|\supp(S)|=|S|=|G|=4$ and $W\odot S=G\setminus \{0\}$, or
\item[(ii)]$G$ is cyclic, $(-g'+S)=0^{|G|-2}(g)(-g)$, for some $g,\,g'\in G$ with $\ord(g)=|G|$, and $W\odot S=G\setminus\lbrace\frac12(|G|-1)|G|g'\rbrace$. In particular, $W\odot S$ contains every generator $h\in G$.
\end{itemize}
\end{itemize}
\end{theorem}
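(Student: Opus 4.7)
The first move is a normalization: if $W = (a)(a+1)\cdots(a+|S|-1)$ and $W' = (0)(1)\cdots(|S|-1)$, then for any permutation $\tau$ of $\{1,\ldots,|S|\}$ one has $\sum_i w_{\tau(i)} s_i = a(s_1+\cdots+s_{|S|}) + \sum_i(\tau(i)-1)s_i$, so $W\odot S$ is the translate $a(s_1+\cdots+s_{|S|}) + W'\odot S$. Translation preserves $|W\odot S|$, the condition $W\odot S=G$, and (with a suitable choice of $g'$) the form of the exceptional missing element in (iii). Thus I may assume $W = W' = (0)(1)\cdots(|S|-1)$ throughout. A second essential observation is the \emph{swap identity}: if $\tau$ yields the sum $\sigma_\tau \in W\odot S$ and $\tau'$ is obtained from $\tau$ by transposing positions $i,j$, then the corresponding sum is $\sigma_\tau + (\tau(j)-\tau(i))(s_i - s_j) \in W\odot S$. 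For transpositions of adjacent-valued weights this reduces to $\pm(s_i-s_j)$, yielding a natural ``walk'' structure on $W\odot S$.

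For the second bullet, my plan is to remove a term $s_0 \mid S$ so that $S/s_0$ still satisfies the non-coset hypothesis. Since $S$ itself does, a short counting argument over proper maximal subgroups $M\leq G$ shows such $s_0$ exists whenever $|S|\geq |G|+1$. Assigning $s_0$ the top weight $|S|-1$ reduces to $W^*\odot(S/s_0)$, with $W^* = (0)(1)\cdots(|S|-2)$ and $|S/s_0| = |S|-1 \geq |G|$; iterating down to length exactly $|G|$ (and adjusting $s_0$ to avoid the exceptional configurations of (iii) when needed) produces a subsequence $S'\mid S$ with $W'\odot S' = G$. For the first bullet I induct on $|S|$: the inductive step gives $|W\odot S|\geq |W^*\odot(S/s_0)|\geq \min\{|G|-1,|S|-1\}$, and combining two different choices of $s_0$ via Kneser's theorem upgrades the bound by one when $|S| \leq |G|$.

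The main obstacle is (iii). Assume $|S|=|G|$ and $W\odot S\neq G$; bullet one forces $|W\odot S|=|G|-1$, so $W\odot S = G\setminus\{g^*\}$ for a unique $g^*$. Let $H = \{h\in G: h + W\odot S = W\odot S\}$; Kneser's theorem applied to swap-orbits forces $H$ to contain a large subgroup, and descending modulo $H$ reduces the analysis to small quotients $G/H$. The $C_2\oplus C_2$ case is resolved by direct enumeration of length-$4$ non-coset sequences, yielding exception (i). In the cyclic case, for $S$ of the hypothesized form $(g')^{|G|-2}(g+g')(-g+g')$, a direct computation gives $\sigma_\tau = \binom{|G|}{2} g' + (\tau(a)-\tau(b))g$, where $a,b$ are the two positions holding terms different from $g'$; as $\tau$ varies this ranges over $G \setminus\{\binom{|G|}{2}g'\}$ since $g$ generates $G$. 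The ``generator'' addendum follows because $\binom{|G|}{2}g'$ has order at most $2$ when $|G|$ is even and equals $0$ when $|G|$ is odd. The technical core—and the main obstacle—will be ruling out \emph{all other} $S$ of length $|G|$ with $W\odot S\subsetneq G$; I anticipate this requires repeated descent modulo stabilizers together with delicate analysis of near-coset configurations.
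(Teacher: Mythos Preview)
Your proposal has a concrete error in the treatment of bullet three. You define $H = \{h\in G: h + W\odot S = W\odot S\}$ and claim Kneser forces $H$ to be large. But you have already established $|W\odot S| = |G|-1$, so $W\odot S = G\setminus\{g^*\}$ for a single $g^*$; the stabilizer of a set missing exactly one element is trivial whenever $|G|\geq 2$. There is no nontrivial $H$ to descend by, and the plan ``reduce to small quotients $G/H$'' collapses at the first step. The paper does use Kneser and stabilizers, but not on $W\odot S$ itself: it decomposes $S$ into short blocks $S_1\cdots S_r$ (a ``well-balanced setpartition''), applies a technical lemma (\autoref{special-lemma}) to each block to extract four-element sets $A_i\subseteq W_i\odot S_i$ with controlled periodicity, and then runs an iterated Kneser argument on the sumset $\sum A_i$, where the stabilizer \emph{is} forced to be large ($|H|\geq 5$) by the block design. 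This is the genuine technical core, and nothing in your sketch approaches it.

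Your inductive plan for the first bullet is also incomplete. Removing one term and assigning it a boundary weight gives $|W\odot S|\geq \min\{|G|-1,|S|-1\}$, but the sentence ``combining two different choices of $s_0$ via Kneser's theorem upgrades the bound by one'' does not specify a sumset to which Kneser applies; the two translates $(|S|-1)s_0 + W^*\odot(S/s_0)$ and $(|S|-1)s_0' + W^*\odot(S/s_0')$ are subsets of $W\odot S$, and Kneser says nothing about unions. The paper does not induct by peeling terms at all: it splits into five cases according to the maximum multiplicity $\h(S)$, handling $|\supp(S)|=2$ and $\h(S)\geq|S|-2$ directly via an explicit two-term lemma (\autoref{key-lemma}), invoking a separate subgroup-reduction case when many terms lie in a proper coset, and reserving the setpartition machinery for the low-multiplicity regime $\h(S)\leq\frac13(|S|+1)$. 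Your derivation of bullet two from bullet three and your verification of the explicit exceptional form in (ii) are fine and match the paper, but the substance of the theorem lies elsewhere.
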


In the final sections, we give simple corollaries of the above theorem first regarding whether a linear equation has a solution modulo $n$ with all members of the solution distinct modulo $n$, and then concerning the pattern of multiplicities possible in a maximal length minimal zero-sum sequence over a rank $2$ finite abelian group, thus providing more refined information than immediately available from the recent characterization of such sequences \cite{MR1985670} \cite{propB-2} \cite{propB-3} \cite{propB-4} \cite{propB-5}.

\section{Preliminaries}

Our notation and terminology are consistent with \cite{alfredbook} \cite{Alfred-BCN-notes} \cite{GaoGer-survey}. We briefly gather some key notions and fix the notation concerning sequences and sumsets over finite abelian groups. Let $\N$ denote the set of positive integers and let $\N_0=\N\cup\lbrace 0\rbrace$. For $a,\,b\in\Z$, we set $[a,b]=\lbrace x\in\Z: a\leq x\leq b\rbrace$. Throughout, all abelian groups will be written additively. We let  $C_n$ denote a cyclic group with $n$ elements.

Let $G$ be a finite abelian group, $H\leq G$ a subgroup and $A\subseteq G$ a subset. We use  $\phi_H:\rightarrow G/H$ to denote the canonical homomorphism and  let $\la A\ra_*=\la A-A\ra$ denote the minimal subgroup $\la A\ra_*$ for which $A$ is contained in a $\la A\ra_*$-coset. Note that $\la A\ra_*=\la A-a\ra$ for any $a\in A$.

For subsets $A,\,B\subseteq G$, we set $$A+B=\lbrace a+b:a\in A,\,b\in B\rbrace$$ for their \emph{sumset} and, if $B=\lbrace b\rbrace$,  write $A+B=A+b=\lbrace a+b:a\in A\rbrace$. We write $$\mathsf H(A)=\lbrace g\in G:g+A=A\rbrace$$ for the \emph{stabilizer} of $A$, which is in fact a subgroup of $G$ for finite $A$.
If $A$ is a union of $H$-cosets, for some subgroup $H\leq G$, then we say $A$ is \emph{$H$-periodic}, which is equivalent to saying $H\leq \mathsf H(A)$, i.e, that $A+H=A$.
We call $A$ \emph{periodic} if $\mathsf H(A)$ contains a nontrivial subgroup, and otherwise $A$ is \emph{aperiodic}.

We use $\mathcal F(G)$ to denote all finite length (unordered) sequences with terms from $G$, refer to the elements of $\mathcal F(G)$ simply as sequences, and write all such sequences multiplicatively, so that a sequence $S\in\mathcal F(G)$ is written in the form
\[
S=g_1\cdot\ldots\cdot g_l=\prod_{g\in G}g^{\vp_g(S)},\quad\mbox{with }\vp_g(S)\in\N_0\quad\mbox{for all }g\in G.
\]
We call $\vp_g(S)$ the \emph{multiplicity} of $g$ in $S$ and say that $S$ contains $g$ if $\vp_g(S)>0$. The notation $S_1\mid S$ indicates that $S_1$ is a subsequence of $S$, that is, $\vp_g(S_1)\leq\vp_g(S)$ for all $g\in G$. If a sequence $S\in\mathcal F(G)$ is written in the form $S=g_1\mdots g_l$, we tacitly assume that $l\in\N_0$ and $g_1,\ldots,g_l\in G$. A sequence of finite, nonempty subsets of $G$ is called a \emph{setpartition}.

For a sequence
\[
 S=g_1\mdots g_l=\prod_{g\in G}g^{\vp_g(S)}\in\mathcal F(G)
\]
and $n\in\N$, we call
\begin{align*}
 |S|=l=\sum_{g\in G}\vp_g(S)\in\N_0\quad &\quad\mbox{the \emph{length} of $S$,}\\
 \sigma(S)=\sum_{i=1}^lg_i=\sum_{g\in G}\vp_g(S)g\in G\quad &\quad\mbox{the \emph{sum} of $S$,}\\
 \Sigma_n(S)=\left\lbrace\sum_{i\in I}g_i:\; I\subseteq[1,l],\,|I|=n\right\rbrace\subseteq G\quad &\quad\mbox{the \emph{set of $n$-term subsequence sums} of $S$,}\\
 \supp(S)=\lbrace g_1,\ldots,g_l\rbrace=\lbrace g\in G:\vp_g(S)>0\rbrace\quad &\quad\mbox{the \emph{support} of $S$, and}\\
 \h(S)=\max\lbrace\vp_g(S):g\in G\rbrace\quad &\quad\mbox{the \emph{maximum multiplicity} of a term of $S$.}
\end{align*}
For $g'\in G$, we write
\[
 (g'+S)=(g'+g_1)\mdots (g'+g_l)=\prod_{g\in G}(g'+g)^{\vp_g(S)}=\prod_{g\in G}g^{\vp_{g-g'}(S)}\in\mathcal F(G).
\]
The sequence $S$ is called
\begin{itemize}
\item a \emph{zero-sum sequence} if $\sigma(S)=0$,
\item \emph{zero-sum free} if there is no non-trivial zero-sum subsequence, and
\item a \emph{minimal zero-sum sequence} if $|S|>0$, $\sigma(S)=0$, and every subsequence $S'\mid S$ with $0< |S'|<|S|$ is zero-sum free.
\end{itemize}
The Davenport constant $\mathsf D(G)$ of $G$ is then the smallest integer $l\in\N$ such that every sequence $S$ over $G$ of length $|S|\geq l$ has a non-trivial zero-sum subsequence (equivalently, $S$ is not zero-sum free).

The following is one of the foundational results of set addition. Note that multiplying both sides of the inequality from Kneser's Theorem \cite{kt} \cite{natbook} \cite{alfredbook} by $|H|$ yields $$|\Sum{i=1}{n}A_i|\geq \Sum{i=1}{n}|A_i+H|-(n-1)|H|=\Sum{i=1}{n}|A_i|-(n-1)|H|+\rho,$$ where
$\rho:=\Sum{i=1}{n}|(A_i+H)\setminus A_i|$ is the number of $H$-holes in the sets $A_i$. Additionally, if $\Sum{i=1}{n}A_i$ is aperiodic, then Kneser's Theorem implies $$|\Sum{i=1}{n}A_i|\geq \Sum{i=1}{n}|A_i|-n+1.$$

\begin{theorem}[Kneser's Theorem]
Let $G$ be an abelian group, let $A_1,\ldots,A_n\subseteq G$ be finite, nonempty subsets, and let $H=\mathsf H(\Sum{i=1}{n}A_i)$. Then
$$|\Sum{i=1}{n}\phi_H(A_i)|\geq \Sum{i=1}{n}|\phi_H(A)|-n+1.$$
\end{theorem}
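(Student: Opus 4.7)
The plan is to prove Kneser's Theorem by induction on $n$, with the essential content in the case $n=2$. Throughout it is more convenient to work with the equivalent scaled form $|A_1 + \cdots + A_n| \geq \sum_{i=1}^n |A_i + H| - (n-1)|H|$, where $H = \mathsf{H}(A_1 + \cdots + A_n)$; one recovers the stated inequality by dividing by $|H|$ and using that $A_1 + \cdots + A_n$ is $H$-periodic. The base $n = 1$ is trivial.

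For the case $n = 2$, write $A = A_1$, $B = A_2$, and $H = \mathsf{H}(A+B)$. I would use the Dyson $e$-transform: given $e \in A - B$, the pair $A_e := A \cup (B + e)$ and $B_e := B \cap (A - e)$ satisfies $A_e + B_e \subseteq A + B$ and $|A_e| + |B_e| = |A| + |B|$, with $|B_e| \leq |B|$. Strong induction on $|B|$ handles the case when some choice of $e$ strictly reduces $|B|$, comparing the new stabilizer $\mathsf{H}(A_e + B_e)$ with $H$ via a quotient argument. Otherwise $B + e \subseteq A$ for every $e \in A - B$, which yields $A + (B - B) \subseteq A$, so $\langle B - B\rangle \leq \mathsf{H}(A)$; consequently $B$ lies in a single $H$-coset, $A+B$ is a translate of $A$, and the desired inequality reduces to the equality $|A| = |A|$.

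For the inductive step $n \geq 3$, set $C = A_1 + \cdots + A_{n-1}$ and apply the $n = 2$ case to the pair $(C, A_n)$ to obtain $|C + A_n| \geq |C + H| + |A_n + H| - |H|$, where $H = \mathsf{H}(C + A_n)$. It then remains to prove $|C + H| \geq \sum_{i=1}^{n-1} |A_i + H| - (n-2)|H|$. Since $C + H = (A_1 + H) + \cdots + (A_{n-1} + H)$, this is a sumset question in $G/H$, and the crucial observation is that $\phi_H(C)$ is aperiodic in $G/H$: any subgroup $L/H \leq G/H$ stabilizing $\phi_H(C)$ would also stabilize $\phi_H(C) + \phi_H(A_n) = \phi_H(C + A_n)$, forcing $L \leq H$ by definition of $H$. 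Applying the inductive hypothesis to $\phi_H(A_1), \ldots, \phi_H(A_{n-1})$ in $G/H$, with trivial relevant stabilizer, yields exactly the bound needed, and multiplying through by $|H|$ converts it to the desired form.

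The main obstacle is the $n = 2$ case, where the Dyson transform may alter the stabilizer: when $\mathsf{H}(A_e + B_e) \neq H$, one must pass to the quotient by the larger of the two stabilizers and reconcile the bounds using that $A + B$ is $H$-periodic, which requires some bookkeeping. The reduction of general $n$ to $n = 2$, by comparison, rests on the single observation that $\phi_H(C)$ is aperiodic in $G/H$, which follows directly from the definition of $H$ as the stabilizer of the full sumset.
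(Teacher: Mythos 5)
The paper does not prove this statement: Kneser's Theorem is quoted as a classical result with citations to Kneser's original paper and to the books of Nathanson and Geroldinger--Halter-Koch, so there is no proof of record to compare against. Judged on its own terms, your reduction of the general case to $n=2$ is correct and essentially complete: taking $C=A_1+\cdots+A_{n-1}$ and $H=\mathsf H(C+A_n)$, the observation that any $L/H$ stabilizing $\phi_H(C)$ also stabilizes $\phi_H(C+A_n)$ and hence is trivial is exactly right, and it lets you apply the inductive hypothesis in $G/H$ with trivial stabilizer to get $|C+H|\geq \sum_{i=1}^{n-1}|A_i+H|-(n-2)|H|$, which combines with the two-set case as you say. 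The degenerate branch of the $n=2$ case (where $B+e\subseteq A$ for every $e\in A-B$, forcing $\langle B-B\rangle\leq\mathsf H(A)$ and equality) is also correct.

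The genuine gap is in the main branch of the $n=2$ case, and it is precisely the point you wave at with ``requires some bookkeeping.'' The Dyson transform gives $A_e+B_e\subseteq A+B$ and $|A_e|+|B_e|=|A|+|B|$, and induction on $|B|$ yields $|A+B|\geq|A_e+B_e|\geq |A_e+H_e|+|B_e+H_e|-|H_e|\geq |A|+|B|-|H_e|$ with $H_e=\mathsf H(A_e+B_e)$. But there is no containment between $H_e$ and $H=\mathsf H(A+B)$ in general: $H_e$ can be strictly larger than $H$, in which case $|A|+|B|-|H_e|$ is weaker than the bound you need, and ``passing to the quotient by the larger stabilizer'' does not resolve this, because $A+B$ need not be $H_e$-periodic. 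Handling this mismatch is the actual content of Kneser's theorem; the standard treatments (Kneser's double induction, or Mann's and Nathanson's expositions) devote the bulk of the argument to exactly this case, typically via a secondary induction and a careful analysis of how $A+B$ decomposes into $H_e$-cosets. As written, your proposal names the right tool but omits the step that makes the theorem nontrivial, so the $n=2$ case cannot be considered established.
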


We will also need the following simple consequence of the Pigeonhole Principle \cite{natbook}.

\begin{lemma}\label{pigeonhole-lemma}
Let $G$ be a finite abelian group and let $A,\,B\subseteq G$ be nonempty subsets. If
$|A|+|B|-1\geq |G|$, then $A+B=G$.
\end{lemma}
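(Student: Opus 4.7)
The plan is to prove this via the classical fix-a-target-and-translate argument. Fix an arbitrary $g\in G$; the goal is to exhibit $a\in A$ and $b\in B$ with $a+b=g$, which will show $g\in A+B$, and hence $A+B=G$ since $g$ was arbitrary.

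First, I would introduce the translated set $g-B=\{g-b:b\in B\}\subseteq G$. Translation by a fixed element is a bijection on $G$, so $|g-B|=|B|$. Then I observe that finding the desired representation $g=a+b$ is equivalent to finding a common element of $A$ and $g-B$: indeed, $a\in A\cap(g-B)$ is the same as $a=g-b$ for some $b\in B$, i.e., $a+b=g$.

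Next I would apply the pigeonhole principle to $A$ and $g-B$, both viewed as subsets of the $|G|$-element set $G$. By hypothesis $|A|+|g-B|=|A|+|B|\geq |G|+1$, so these two sets cannot be disjoint (otherwise their union, a subset of $G$, would have more than $|G|$ elements). Therefore $A\cap (g-B)\neq \emptyset$, which by the previous step furnishes the required $a\in A$ and $b\in B$ with $a+b=g$.

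There is essentially no obstacle here — the argument is a two-line pigeonhole computation once the correct translate $g-B$ is written down. The only mild care is to note that the translation is a bijection so that $|g-B|=|B|$, which is what makes the cardinality bound $|A|+|g-B|>|G|$ actually give a nonempty intersection inside $G$.
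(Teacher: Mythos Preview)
Your argument is correct and is precisely the standard pigeonhole proof; the paper does not write out a proof at all, merely citing the lemma as a ``simple consequence of the Pigeonhole Principle'' with a reference to Nathanson's book, so your write-up is exactly the intended justification.
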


\section{Proof of \autoref{main}}
\label{mainsec}

For two sequences $W\in\Fc(\Z)$ and $S\in \Fc(G)$, where $G$ is an abelian group, set $$W\odot S=\{w_1g_1+\ldots+w_rg_r:\,w_1\cdot\ldots\cdot w_r\mid W,\; g_1\cdot\ldots\cdot g_r\mid S\und r=\min\{|W|,\,|S|\}\}.$$
Note that $$W\odot S=(W0^{|S|-r})\odot (S0^{|W|-r})\quad\mbox{ with }\quad |W0^{|S|-r}|=|S0^{|W|-r}|=\max\{|W|,\,|S|\},$$ where $r=\min\{
|W|,\,|S|\}$.
 Also, if $|W|\geq |S|$, then \be(W+w)\odot S=W\odot S+w\sigma(S)\quad \mbox{ for all }\; w\in \Z,\label{invariant-W-shift}\ee while if $|S|\geq |W|$, then \be\label{invar-S-shift}
W\odot (S+g)=W\odot S+\sigma(W)g\quad \mbox{ for all }\; g\in G.\ee
In particular, if $|W|=|S|$, then $G=W\odot S$ if and only if $G=(W+w)\odot(S+g)$ for all $w\in \Z$ and $g\in G$.

We begin with a lemma dealing with the case $|S|=3$ for \autoref{main}.

\begin{lemma}\label{key-lemma}
Let $G$ be an abelian  group, let $W=(0)(1)\cdot\ldots\cdot(|W|-1) \in\Fc(\Z)$ be a sequence of consecutive integers, let $x,\,y\in G\setminus \{0\}$ be  nonzero elements with $\langle x,\,y\rangle=G$, and set $S=xy\in \Fc(G)$.
\begin{itemize}
\item[(i)] If $|W|\geq 3$, then
 $\langle W\odot S\rangle_*=G$.
 \item[(ii)] If $x=y$, then $|W\odot S|\geq \min\{|G|,\,2|W|-3\}.$
\item[(iii)] If $x\neq y$, then $|W\odot S|\geq \min\{|G|-1,\,2|W|-2\}.$
\end{itemize}
\end{lemma}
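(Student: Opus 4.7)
The plan is to prove parts (i), (ii), and (iii) in order. For (i), I would note that with $|W| \geq 3$ the pairs $(0,1), (0,2), (1,0), (2,0)$ lie in the valid index set and yield elements $y, 2y, x, 2x \in W \odot S$; the pairwise differences include both $y$ and $x$, so $\langle W \odot S \rangle_* \supseteq \langle x, y \rangle = G$. For (ii), setting $x = y$ gives $W \odot S = \{(i+j)x : i \neq j,\ i, j \in [0, |W|-1]\}$, and since the integer set $\{i+j : 0 \leq i < j \leq |W|-1\}$ equals $[1, 2|W|-3]$, we get $W \odot S = \{kx : k \in [1, 2|W|-3]\}$ of cardinality $\min\{|G|, 2|W|-3\}$ since $\langle x \rangle = G$.

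The main work is in (iii); set $n = |W|$. The case $n = 2$ gives $W \odot S = \{x, y\}$ of size $2$, matching $\min\{|G|-1, 2\}$. For $n \geq 3$ I would proceed by case analysis on $\ord(x), \ord(y)$ and on $\langle x \rangle \cap \langle y \rangle$. If $\ord(y) \leq n - 1$ (symmetrically $\ord(x) \leq n - 1$), then for each $i$ the set $\{jy : j \in [0, n-1],\ j \neq i\}$ covers $\langle y \rangle$ with at most one missing element, so $W \odot S$ contains, up to a few diagonal deletions, the union $\bigcup_i (ix + \langle y \rangle)$, comprising $\min\{n, |G|/|\langle y \rangle|\}$ cosets (using $\langle x, y \rangle = G$ so that $x + \langle y\rangle$ generates the quotient); since $|\langle y \rangle| \geq 2$, this easily surpasses the target. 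If instead $\ord(x), \ord(y) \geq n$ and $\langle x \rangle \cap \langle y \rangle = \{0\}$, then the $(n-1)$-element sets $X = \{jy : j \in [1, n-1]\}$ and $Y = \{ix : i \in [1, n-1]\}$ lie disjointly in $W \odot S$, whence $|W \odot S| \geq 2n - 2$.

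The remaining case has $\ord(x), \ord(y) \geq n$ with $\langle x \rangle \cap \langle y \rangle$ nontrivial, forcing a relation $ix = jy$ for some $i, j \in [1, n-1]$; this contains the main obstacle. Here I would use the extra pair $(1, 2)$, which contributes $x + 2y \in W \odot S$: either $x + 2y \notin X \cup Y$, producing the required extra element, or $x + 2y \in X$ forces $x \in \langle y \rangle$ and hence $G = \langle y \rangle$ is cyclic (the subcase $x + 2y \in Y$ being symmetric). In the cyclic subcase, I would identify $G \cong \mathbb{Z}/t$ with $y = 1$ and $x = m \neq 0, 1$, reducing the problem to showing $|\{im + j \bmod t : i \neq j,\ i, j \in [0, n-1]\}| \geq \min\{t-1, 2n-2\}$. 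This last step is the hardest: the plan is to apply Kneser's theorem to the sumset $mA + A$ (where $A = [0, n-1]$) and carefully track which diagonal elements $k(m+1)$ admit an off-diagonal representation $im + j$ with $i \neq j$. The tight example is $m \equiv -1 \pmod t$, where the diagonal collapses to $\{0\}$ and $W \odot S$ coincides with the restricted difference set $\{j - i \bmod t : i \neq j\}$ of cardinality exactly $\min\{t-1, 2n-2\}$; ensuring the Kneser-based count leaves no slack in all the other residue configurations of $m$ is the delicate point.
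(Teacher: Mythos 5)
Your parts (i) and (ii) are fine and match the paper. For (iii) the overall shape (view $W\odot S$ as the $n\times n$ array of sums $ix+jy$ minus the diagonal, group entries into cosets, and isolate a hard cyclic case) is the right one and is close in spirit to the paper's proof, which splits on $|G/\langle x\rangle|\geq 3$, $=2$, $=1$ after normalizing $\ord(x)\geq\ord(y)$. But your reduction to the cyclic case is broken. First, in the branch $\ord(x),\ord(y)\geq n$ with $X=\{jy\}_{j=1}^{n-1}$ and $Y=\{ix\}_{i=1}^{n-1}$ overlapping, a single additional element $x+2y\notin X\cup Y$ only compensates for $|X\cap Y|=1$; if $|X\cap Y|\geq 2$ (e.g.\ $G=\Z/p$, $x=1$, $y=2$, where $|X\cap Y|\approx n/2$) you are still short of $2n-2$, so the dichotomy ``either done or cyclic'' does not hold. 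Second, the two subcases are not symmetric: $x+2y\in Y$ gives $x+2y=ix$, hence only $2y\in\langle x\rangle$, which does not force $y\in\langle x\rangle$ or $G$ cyclic (take $G\cong C_2\oplus C_{2k}$, $x=(0,1)$, $y=(1,1)$). Also, nontriviality of $\langle x\rangle\cap\langle y\rangle$ does not by itself produce a relation $ix=jy$ with $i,j\in[1,n-1]$; the condition you actually need is $X\cap Y\neq\emptyset$. Finally, in your first branch ($\ord(y)\leq n-1$) the count ``number of cosets times $(|\langle y\rangle|-1)$'' gives only $n$ when $\ord(y)=2$, which is below $2n-2$; this is repairable by noting that at most one coset actually loses an element, but as written the claim that it ``easily surpasses the target'' is false.

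The more serious issue is that the cyclic case, which is where essentially all of the work in the paper lies, is left unproved. Kneser's theorem applied to $mA+A$ with $A=[0,n-1]$ gives $|mA+A|\geq\min\{t,2n-1\}$ (or a periodic variant), but the statement you need is that at most one element of $mA+A$ has \emph{only} diagonal representations $im+i$, and Kneser gives no control over that. The paper handles this by an explicit element-by-element interval analysis: after reducing to $r\leq\lceil(t+1)/2\rceil$ and to $m=\alpha\in[-r+2,r-1]$, it treats $\alpha\in[2,r-1]$, $\alpha=-1$, $\alpha=-2$, and $\alpha\leq -3$ separately, in each case exhibiting an explicit interval of length about $2r-2$ contained in $W\odot S$ with at most one puncture. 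You have correctly identified the tight example $m\equiv -1$ and correctly located the delicate point, but identifying the difficulty is not the same as resolving it; as it stands the core of (iii) is missing.
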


\begin{proof}
If $|W|\leq 2$, then the lemma is easily verified. So we may assume $|W|\geq 3$. In this case, $x,\,2x,\,2x+y\in W\odot S$, so that $$\langle W\odot S\rangle_*\supseteq\langle x,2x,2x+y\rangle_*=\langle x,x+y\rangle =\langle x,y\rangle =G,$$ whence $\langle W\odot S\rangle_*=G$ follows, yielding (i).
If  $x=y$, then $$W\odot S=\{x+0,2x+0,\ldots,(|W|-1)x+0,(|W|-1)x+x,\ldots, (|W|-1)x+(|W|-2)x\},$$ from which (ii) is readily deduced. Therefore
it remains to prove the lower bound for $|W\odot S|$ when $x\neq y$.

Without loss of generality, assume $\ord(x)\geq \ord(y)$. Let $r=|W|\geq 3$ and set $H=\langle x\rangle$. Since $G/H=\langle \phi_H(y)\rangle$, it follows that $$|H|=\ord(x)\geq \ord(y)\geq \ord(\phi_H(y))=|G/H|.$$
Now we have \be\label{rubbter}
W\odot S=\left\{
           \begin{array}{lllllll}
             \Box & 0+y & 0+2y & &\cdots & & 0+(r-1)y \\
             x & \Box & x+2y & &\cdots & &x+(r-1)y \\
             2x & 2x+y& \Box & &\cdots & &2x+(r-1)y \\
             3x & 3x+y & 3x+2y & &\cdots & &3x+(r-1)y \\
             \vdots & \vdots & \vdots & & & & \vdots \\
             (r-1)x & (r-1)x+y & (r-1)x+2y & &\cdots  & & \Box \\
           \end{array}
         \right\}.\ee Note that each column consists of elements from the same $H$-coset. We divide the remainder of the proof into several cases based off the number of $H$-cosets in $G$.

\subsection*{Case 1:} $|G/H|\geq 3$. If $r\leq |G/H|\leq |H|=\ord(x)$, then all columns in \eqref{rubbter} correspond to distinct $H$-cosets filled with distinct elements, whence $|W\odot S|=r(r-1)\geq 2r-2$. If $|G/H|+1\leq r\leq |H|=\ord(x)$, then the
first $|G/H|$ columns in \eqref{rubbter} are distinct and each contain at least $r-1$ elements, whence $|W\odot S|\geq (r-1)|G/H|\geq 3r-3\geq 2r-2$. Finally, it remains to consider the case $r>|H|=\ord(x)$, for which $\ord(x)=|H|$ must be finite. Let $r=|H|+s$ with $s\geq 1$. In this case, we see that the first $|G/H|$ columns cover all distinct $H$-cosets and are each missing at most one element, while the first $s$ of these columns are missing no elements. In consequence, $|W\odot S|\geq (|H|-1)|G/H|+\min\{|G/H|,\,s\}$. If $s\geq |G/H|$, then $|W\odot S|\geq |G|$ follows, as desired. Otherwise, when $1\leq s\leq |G/H|-1\leq |H|-1$, we can recall that $r=|H|+s$ and $|G/H|\geq 3$ and thus conclude that \ba\nn|W\odot S|&\geq (|H|-1)|G/H|+s\geq 3|H|-3+s=r-2+|H|+(|H|-1)\\\nn&\geq
r-2+|H|+s= 2r-2,\ea also as desired.

\subsection*{Case 2:} $|G/H|=2$. In this case, since $r\geq 3>|G/H|$, we see that the first two columns of \eqref{rubbter} cover both distinct $H$-cosets. If $r\leq \ord(x)=|H|$, then there are $r-1$ elements in both these columns, whence $|W\odot S|\geq 2(r-1)$, as desired. On the other hand, if $r\geq \ord(x)+1$, then the first column is missing no element while the second column is missing at most one, whence $|W\odot S|\geq |G|-1$, also as desired.

\subsection*{Case 3:} $|G/H|=1$. In this case, $x$ generates $G$, and thus $y=\alpha x$ for some $\alpha\in \Z$ with $\alpha\in (-\frac{n}{2}, \lfloor\frac{n+1}{2}\rfloor]$, where $n:=\ord(x)=|G|$.
 It suffices to prove (iii) when $$|W|=r\leq \left\lceil\frac{n+1}{2}\right\rceil,$$ as for larger $|W|$, one can simply apply (iii) using $r=\left\lceil\frac{n+1}{2}\right\rceil$ and note that $2r-2\geq n-1=|G|-1$ holds in this case. Thus, in view of $r\geq 3$, it follows that $n=|G|\geq 4$. To simplify notation, we may assume $x=1$ generates the cyclic group $G\cong C_n$.

Now, from \eqref{rubbter}, we know that $\{1,2,\ldots,(r-1)\}\subseteq W\odot S$.
We also have \be\label{cuyo}\{0+\alpha,\;2+\alpha,\; 3+\alpha,\;\ldots,(r-1)+\alpha \}\subseteq W\odot S.\ee Note that $r-1+\alpha\leq \lceil\frac{n+1}{2}\rceil-1+\lfloor\frac{n+1}{2}\rfloor=n$. Thus, if $\alpha\geq r$, then the elements from \eqref{cuyo} will be disjoint from $\{1,2,\ldots,(r-1)\}\subseteq W\odot S$, whence $|W\odot S|\geq 2(r-1)$, as desired. Likewise, if $\alpha\leq -(r-1)$, then we have $n+\alpha\geq \frac{n+1}{2}>r-1$, and the elements in \eqref{cuyo} will again be disjoint from $\{1,2,\ldots,(r-1)\}\subseteq W\odot S$, yielding the desired bound $|W\odot S|\geq 2(r-1)$ once more. Thus, in both cases, (iii) holds, and we may now  assume \be\label{stungle}-r+2\leq \alpha\leq r-1.\ee

\medskip

Suppose $\alpha\geq 0$. Then, in view of $y\neq x$, $y\neq 0$ and \eqref{stungle}, we have $\alpha\in [2,r-1]$. The sums $0+1,0+2,\ldots,0+r-1\in W\odot S$ show that $[1,r-1]\subseteq W\odot S$. The sums $j\cdot \alpha+(r-\alpha+i)$, for $j\in [1,r-1]$ and $i\in [0,\alpha-1]\setminus \{j-r+\alpha\}$, show that each interval $[r+(j-1)\alpha,r+j\alpha-1]$ is contained in $W\odot S$ apart from possibly the element $j\alpha+(r-\alpha+i)=j\alpha+j$ when $i=j-r+\alpha\in [0,\alpha-1]$, for $j\in [1,r-1]$.
In particular, in order for an element to be missing from the interval $[r+(j-1)\alpha,r+j\alpha-1]$ in $W\odot S$, we must have $j-r+\alpha\geq 0$, i.e., $j\geq r-\alpha$. As a result, we conclude from all of the above that
$$[1, (r-\alpha+1)\alpha+r-\alpha]\setminus \{(r-\alpha)\alpha+(r-\alpha)\}\subseteq W\odot S,$$
from which, in view of $\alpha\in [2,r-1]$ and $r\geq 3$, it is easily deduced that
\ba |W\odot S|&\geq \min \{|G|-1,\;(r-\alpha+1)\alpha+r-\alpha-1\}\\ &\geq
 \min\{|G|-1,\; 3r-5,\;2r-2 \}\geq \min\{|G|-1,\;\;2r-2\},\ea  as desired. So we now assume $\alpha<0$.

\medskip

Since $\alpha<0$, we infer from \eqref{stungle} that $\alpha\in [-r+2,-1]$. Furthermore, \eqref{stungle} also gives \be\label{rradic}r\geq |\alpha|+2.\ee
If $\alpha=-1$, then we clearly have
$$[-(r-1),-1]\cup [1,r-1]=([1,r-1]\odot (-1)+0\cdot 1)\cup (0\cdot(-1)+[1,r-1]\odot 1)\subseteq W\odot S,$$ from which (iii) easily follows.
If $\alpha=-2$, then $[1,r-1]=0\cdot (-2)+[1,r-1]\odot 1\subseteq W\odot S$ and \begin{align*}&\{1\cdot (-2)+2=0,\quad1\cdot (-2)+0=-2,\quad \\&2\cdot (-2)+1=-3,\quad 2\cdot (-2)+0=-4,\quad 3\cdot (-2)+1=-5,\quad3\cdot(-2)+0=-6,\quad\ldots,\\&(r-1)\cdot (-2)+1=-2r+3,\quad (r-1)\cdot (-2)+0=-2r+2\}\subseteq W\odot S.\end{align*} Consequently, $$[-2r+2,r-1]\setminus\{-1\}\subseteq W\odot S,$$ from which it is easily deduced that $|W\odot S|\geq \min\{|G|-1,\;3r-3\}\geq \min\{|G|-1,\;2r-2\}$, as desired.
Therefore we may assume $\alpha\leq -3$, in which case \eqref{rradic} gives $$r\geq |\alpha|+2\geq 5.$$

We know $[1,r-1]=0\cdot \alpha +[1,r-1]\odot 1\subseteq W\odot S$.  Since $\alpha\leq -3$ and $3\leq |\alpha|\leq r-2$, we also have $1\cdot \alpha +|\alpha|\cdot 1=0\in W\odot S$, whence $$[0,r-1]\subseteq W\odot S.$$
Next we claim that, for each $j\in [1,r-1]$, $W\odot S$ also contains all elements  from $[j\alpha,(j-1)\alpha-1]$ except possibly $j\alpha+j$. Indeed, to see this, we have only to note that  $j\cdot \alpha+\beta \cdot 1\in W\odot S$ for $\beta\in [0,|\alpha|-1]\setminus \{j\}$. Next, since $\alpha\leq -2$, it follows  that $$j\alpha+j=(j+1)\cdot \alpha +(|\alpha|+j)\cdot 1\in W\odot S \quad \mbox{ for }\;j\leq r-1-|\alpha|.$$ As a result, we conclude from the above work that $$[(r-|\alpha|+1)\alpha+(r-|\alpha|+1)+1,r-1]\setminus \{(r-|\alpha|)\alpha+(r-|\alpha|)\}\subseteq W\odot S,$$ which, combined with $|\alpha|\in [3,r-2]$ and $r\geq 5$, allows us to easily infer that \ba\nn|W\odot S|&\geq \min\{|G|-1,\; (r-|\alpha|+2)|\alpha|-3\}\\\nn &\geq
\min\{|G|-1,\; 3r-6,\;4r-11\}\geq \min\{|G|-1,\,2r-2\},\ea completing the proof.
\end{proof}

We will need the following technical refinement of the case $|W|=3$ from  \autoref{key-lemma}.

\begin{lemma}\label{special-lemma} Let $G$ be an abelian  group with $|G|\geq 5$, let $W=(0)(1)(2) \in\Fc(\Z)$ be a sequence of $3$ consecutive integers, let $x,\,y,\,z\in G$ be  distinct elements with $\langle x,\,y,\,z\rangle_*=G$, and set $S=xyz\in \Fc(G)$.
Suppose $\ord(x-z),\,\ord(y-z),\,\ord(x-y)\geq 3$.
Then there exists a subset $X\subseteq W\odot S$ with $|X|=4$, $|X\cap (3z+\langle x,\,z\rangle_*)|\geq 2$ and $\langle X\rangle_*=G$. Furthermore, if $G\not\cong C_6$, then $|\mathsf H(X)|\neq 2$.
\end{lemma}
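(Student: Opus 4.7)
The plan is to first reduce to $z=0$ via the translation identity \eqref{invar-S-shift}: replacing $S$ by $(-z+S)$ shifts $W\odot S$ by $\sigma(W)(-z)=-3z$, shifts the coset $3z+\la x,z\ra_*$ by the same amount, and preserves all four required properties (size, membership in the coset, the generated subgroup, and the stabilizer). After this reduction, writing $a=x$ and $b=y$, the hypotheses become $a,b\in G\setminus\{0\}$ distinct with $\la a,b\ra=G$ and $\ord(a),\ord(b),\ord(a-b)\ge3$, while the target coset is $\la a\ra$. A direct enumeration of the six permutations defining $W\odot S$ yields
\[
W\odot S=\{a,\,b,\,2a,\,2b,\,a+2b,\,2a+b\}.
\]

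My main candidate is $X_1=\{a,2a,b,2b\}\subseteq W\odot S$. Three of the four properties come out cheaply: the order hypotheses rule out all coincidences among these four elements except the two degenerate configurations $a=2b$ and $b=2a$ (in particular $2a=2b$ is forbidden since $\ord(a-b)\ge3$), so $|X_1|=4$ outside those cases; $\{a,2a\}\subseteq\la a\ra$ gives the coset condition; and $a=(2a)-a$ together with $b=(2b)-b$ both lie in $X_1-X_1$, so $\la X_1\ra_*=\la a,b\ra=G$. The delicate step, and the main obstacle, will be showing $|\mathsf H(X_1)|\neq2$ when $G\not\cong C_6$. Because $\mathsf H(X_1)$ acts freely on $X_1$, its order divides $4$; and $|\mathsf H(X_1)|=4$ would force $X_1$ to be a coset of $\mathsf H(X_1)$, hence $\mathsf H(X_1)\supseteq X_1-X_1\supseteq\la a,b\ra=G$, giving $|G|=4$, contradicting $|G|\ge5$. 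So it suffices to forbid any $h\neq 0$ with $2h=0$ from stabilizing $X_1$. I plan a case analysis on the image of $a$ under translation by $h$: the case $a\mapsto 2a$ forces $h=a$ with $\ord(h)\ge 3$, violating $2h=0$; the case $a\mapsto b$ gives $h=b-a$, and then $2a+h=a+b$ must lie in $X_1$, yet each resulting equality ($b=0$, $b=a$, $a=0$, $a=b$) is forbidden; the case $a\mapsto 2b$ gives $h=2b-a$, and inspecting the image $2a+h=a+2b\in X_1$ leaves only $a=2b$ (making $h=0$) or $a=-b$. In the surviving branch $a=-b$, one computes $h=3b$, and $2h=6b=0$ together with $G=\la a,b\ra=\la b\ra$ and $|G|\ge5$ forces $\ord(b)=6$, i.e., $G\cong C_6$, exactly the permitted exception.

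For the two exceptional subcases $a=2b$ or $b=2a$, the hypotheses force $G$ to be cyclic, generated by $b$ (resp.\ $a$) of order $n=|G|\ge 5$, and substitution reduces $W\odot S$ to the four-element set $\{b,2b,4b,5b\}$ (resp.\ $\{a,2a,4a,5a\}$); I take $X$ to be this entire set. The four residues $1,2,4,5$ are distinct modulo $n$ for $n\ge5$, so $|X|=4$; the coset condition holds because $\{2b,4b\}\subseteq\la 2b\ra=\la a\ra$ in the first subcase and is automatic since $\la a\ra=G$ in the second; and $b=(2b)-b\in X-X$ gives $\la X\ra_*=G$. The stabilizer question reduces to computing $\{k\in\Z/n\Z:\{1+k,2+k,4+k,5+k\}=\{1,2,4,5\}\}$; summing forces $4k\equiv 0\pmod n$, restricting $k$ to multiples of $n/\gcd(n,4)$, and a short check of the admissible residues $k\in\{n/4,n/2,3n/4\}$ reveals the only nontrivial stabilizer arises at $n=6$, $k=3$, matching the $G\cong C_6$ exception precisely. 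Once the stabilizer analysis in the main case is pinned down, the cyclic subcases reduce to this small computation.
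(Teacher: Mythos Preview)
Your proof is correct and follows essentially the same overall shape as the paper's: reduce to $z=0$ via \eqref{invar-S-shift}, pick an explicit $4$-element subset of $W\odot S$, verify the coset and generation conditions directly, and rule out an order-$2$ stabilizer by a short case analysis on how the involution pairs up the four elements.

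The one noteworthy difference is the choice of candidate set. The paper takes $X=\{x,2x,2x+y,y\}$ and separates out all three arithmetic-progression configurations ($y=2x$, $x=2y$, $y=-x$) beforehand; you take $X_1=\{x,2x,y,2y\}$ and need to single out only the two collisions $x=2y$ and $y=2x$. The remaining AP case $y=-x$ is absorbed into your main stabilizer analysis and is exactly where the $C_6$ exception emerges. Your set is a bit more symmetric in $x$ and $y$, which makes the pairing analysis for the stabilizer slightly cleaner (each of the three pairings collapses immediately to $x=y$, $x=y$, or $x=-y$, respectively), whereas the paper's asymmetric set requires the auxiliary observation \eqref{stuff}. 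In the cyclic exceptional subcases both arguments land on the same set $\{1,2,4,5\}\cdot g$ and the same residue check. So the two proofs are minor variants of one another rather than genuinely different arguments.
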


\begin{proof} In view of \eqref{invar-S-shift}, we can w.l.o.g.~translate $S$ so that  $z=0$.
If the three terms of $S$ are in arithmetic progression, say  $S=0(x)(2x)$, $S=0(y)(2y)$ or $S=(-x)0(x)$,
then $W\odot S=\{1,2,4,5\}\odot x$, $W\odot S=\{1,2,4,5\}\odot y$ or $W\odot S=\{-2,-1,1,2\}\odot x$, and the lemma is easily verified taking $X=W\odot S$.
Therefore we may assume $S$ is not in arithmetic progression, whence
\be\label{stuff} y\notin \{-x,0,x,2x\}\quad \und \quad x\notin \{-y,0,y,2y\}.\ee
Consider the set $X:=\{x,2x,2x+y,y\}\subseteq W\odot S$. In view of \eqref{stuff} and $\ord(x)\geq 3$, we have $|X|=4$. We also have $\langle x,2x,2x+y\rangle_*=\langle x,x+y\rangle=\langle x,y\rangle=\langle x,y,z=0\rangle_*=G$, so that $\langle X\rangle_*=G$. Clearly, $|X\cap \langle x\rangle|\geq 2$.

Finally, if $|\mathsf H(X)|=2$, then there must be a pairing up of the $4$ elements of $X$ such that the difference of elements in each pairing is equal to the same order two element. There are three such possible pairings: $\{x,2x\}$ and $\{y,2x+y\}$;  $\{x,y\}$ and $\{2x,2x+y\}$; $\{x,2x+y\}$ and $\{y,2x\}$. Since $\ord(x)\geq 3$ and $\ord(y)\geq 3$, we cannot have $x$ and $2x$, nor $2x$ and $2x+y$, being in the same cardinality two coset, which rules out the first two possible pairings. On the other hand, if  $\{x,2x+y\}$ and $\{y,2x\}$ are both cosets of the same order $2$ subgroup, then we must have $x+y=(2x+y)-x=2x-y$, contradicting \eqref{stuff}. As this exhausts all possible pairings, we conclude that $|\mathsf H(X)|=2$ does not hold, completing the proof.
\end{proof}



Next, we show that if the terms of $S$ generate $G$ (up to translation), then so do the elements of $W\odot S$.

\begin{lemma}\label{lemma-generation}  Let $G$ be an abelian group, let $S\in \Fc(G)$ be a sequence, and let $W\in \Fc(\Z)$ be a sequence of consecutive integers. If $|W|=|S|$, then $\la W\odot S\ra_*=\la \supp(S)\ra_*$.
\end{lemma}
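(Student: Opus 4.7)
The plan is to show both inclusions $\la W\odot S\ra_*\subseteq\la\supp(S)\ra_*$ and $\la\supp(S)\ra_*\subseteq\la W\odot S\ra_*$ separately. Write $S=s_1\mdots s_n$ and $W=w_1\mdots w_n$ with $w_k=w+k-1$ for some fixed $w\in\Z$. The case $n\leq 1$ is trivial since both subgroups equal $\{0\}$, so assume $n\geq 2$.

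For the first inclusion, take any two elements of $W\odot S$, say $x=\Summ{k}w_{\tau(k)}s_k$ and $y=\Summ{k}w_{\tau'(k)}s_k$, for permutations $\tau,\tau'$ of $[1,n]$. Setting $c_k:=w_{\tau(k)}-w_{\tau'(k)}\in\Z$, one has $\Summ{k}c_k=0$ since $\tau$ and $\tau'$ rearrange the same multiset of weights. Therefore $x-y=\Summ{k}c_k s_k=\Summ{k}c_k(s_k-s_1)\in\la\supp(S)\ra_*$, and since $\la W\odot S\ra_*$ is generated by all differences of elements of $W\odot S$, the inclusion follows.

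For the reverse inclusion, it suffices to show $s_i-s_j\in\la W\odot S\ra_*$ for every distinct pair $i,j\in[1,n]$. Pick any permutation $\tau$ with $\tau(i)=1$ and $\tau(j)=2$, and let $\tau'=(1\;2)\circ\tau$. Both sums $\Summ{k}w_{\tau(k)}s_k$ and $\Summ{k}w_{\tau'(k)}s_k$ lie in $W\odot S$, and their difference equals $(w_1-w_2)(s_i-s_j)=-(s_i-s_j)$, using that $W$ consists of consecutive integers so $w_2-w_1=1$. Hence $s_i-s_j\in\la W\odot S\ra_*$, completing the argument.

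No substantive obstacle arises: the whole proof rests on the elementary observation that swapping two adjacent weights in a permutation-sum changes the result by exactly $\pm(s_i-s_j)$, while any difference of two permutation-sums decomposes via coefficients summing to zero into a $\Z$-combination of differences $s_k-s_1$. The role of $W$ being a sequence of consecutive integers enters solely through the existence of two weights differing by $1$, which is precisely what allows the reverse inclusion to produce $s_i-s_j$ itself rather than some integer multiple.
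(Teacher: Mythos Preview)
Your proof is correct and takes a genuinely different route from the paper's. The paper argues by induction on $|S|$: after translating so that $0\in\supp(S)$, it removes a nonzero term $x$, applies the induction hypothesis to $Sx^{-1}$ to see that the portion of $W\odot S$ obtained by assigning weight $0$ to $x$ generates $K=\la\supp(Sx^{-1})\ra$, and then exhibits an element of $W\odot S$ lying in the nontrivial coset $x+K$. Your argument is entirely direct: the forward inclusion comes from the zero-sum coefficient trick, and the reverse inclusion from the single observation that swapping two consecutive weights produces a difference of exactly $s_i-s_j$. This is shorter and more transparent, and it isolates precisely where ``consecutive integers'' matters (namely $w_2-w_1=1$); with a general arithmetic progression of difference $d$ your swap would yield only $d(s_i-s_j)$, which is not enough. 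The paper's inductive approach does not seem to give any additional structural information beyond what the lemma states, so your approach is a strict simplification.
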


\begin{proof}
In view of \eqref{invar-S-shift}, \eqref{invariant-W-shift} and $|W|=|S|$, there is no loss in generality if we translate $W$ and $S$ such that $W=(0)(1)\cdot\ldots \cdot (|S|-1)$ and $0\in \supp(S)$. If $|S|\leq 2$, then the lemma is easily verified. We proceed by induction on $|S|$. If $\supp(S)=\{0\}$, then $\la \supp(S)\ra_*=\{0\}=\la W\odot S\ra_*$. Therefore we may assume $|\supp(S)|\geq 2$. We trivially have $\la W\odot S\ra_*\subseteq \la \supp(S)\ra=\la \supp(S)\ra_*$, with the latter equality in view of $0\in \supp(S)$. Therefore, it suffices to show the reverse inclusion $\la \supp(S)\ra_*\subseteq \la W\odot S\ra_*$.

Let $x\in \supp(S)$ be nonzero. Let $K:=\langle  \supp(Sx^{-1})\rangle$. Since $0\in \supp(Sx^{-1})$, we have $$K=\la \supp(Sx^{-1})\ra_*=\la \supp(Sx^{-1})\ra.$$ Thus, by induction hypothesis, we conclude that  $$\la \left(0\cdot x\right)+\left( W0^{-1}\odot Sx^{-1}\right)\ra_*=K;$$ moreover, since  $R\odot (Sx^{-1})\subseteq \la \supp(Sx^{-1})\ra=K$ for any sequence of integers $R\in\Fc(\Z)$, we actually have $$\left(0\cdot x\right)+\left( W0^{-1}\odot Sx^{-1}\right)\subseteq K.$$ Consequently, to show $\la \supp(S)\ra_*\subseteq \la W\odot S\ra_*$, it suffices to show that $W\odot S$ contains some element from $x+K$. However, clearly $$(1\cdot x)+\left((0)(2)(3)\cdot\ldots\cdot (|S|-1)\odot Sx^{-1}\right)\subseteq W\odot S$$ is a nontrivial subset of $x+\la \supp(Sx^{-1})\ra=x+K$, so that $W\odot S$ indeed contains some element from $x+K$, completing the proof.
\end{proof}

The following lemma can be found in \cite{quasi-kemp} as observation (c.5). See \cite[Proposition 5.2]{phd-thesis} for a more detailed proof.

\begin{lemma}\label{punctured-lem}
Let $G$ be an abelian group, let $A\subseteq G$ be a finite, nonempty subset, and let $x\in G\setminus A$. If $A\cup\{x\}$ is $H$-periodic with $|H|\geq 3$, then $A\cup \{y\}$ is aperiodic for every $y\in G\setminus \{x\}$.
\end{lemma}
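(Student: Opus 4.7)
The plan is to argue by contradiction: suppose $A\cup\{y\}$ is $K$-periodic for some subgroup $K\leq G$ with $|K|\geq 2$. The strategy is to establish two structural identities $A+H=A\cup\{x\}$ and $A+K=A\cup\{y\}$ and then compute the double sum $A+H+K$ in two ways to force a cardinality contradiction using $|H|\geq 3$.

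I would first prove $A+H=A\cup\{x\}$. Since $A\subseteq A\cup\{x\}$ and $A\cup\{x\}$ is $H$-periodic, $A+H$ equals $A$ or $A\cup\{x\}$. If $A+H=A$, then $A$ itself is $H$-periodic, so the coset $x+H\subseteq A\cup\{x\}$ is either contained in $A$ or disjoint from $A$; since $x\notin A$ this forces $x+H=\{x\}$, contradicting $|H|\geq 3$. By the analogous argument when $y\notin A$, or trivially when $y\in A$ (so that $A\cup\{y\}=A$ is $K$-periodic and thus $A+K=A$), one obtains $A+K=A\cup\{y\}$.

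I would then split on whether $y\in A$. If $y\in A$, expanding $A+H+K$ both ways and using $A+K=A$ gives
\[
A\cup(x+K)=(A+H)+K=(A+K)+H=A\cup\{x\};
\]
since $(x+K)\cap A=\emptyset$ ($A$ is $K$-periodic and $x\notin A$), this forces $x+K=\{x\}$ and $K=\{0\}$, a contradiction. If $y\notin A$, then $y\notin A\cup\{x\}$ (as $y\neq x$), and the two expansions give
\[
A\cup\{y\}\cup(x+K)=(A+H)+K=(A+K)+H=A\cup\{x\}\cup(y+H).
\]
Here each of $\{x\}$, $\{y\}$, $x+K$, $y+H$ is disjoint from $A$ — for instance, $x+k\in A$ for some $k\in K$ would yield $x\in A+K=A\cup\{y\}$, impossible — so stripping $A$ off both sides yields
\[
T:=\{y\}\cup(x+K)=\{x\}\cup(y+H).
\]

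Finally I would conclude by a cardinality count on $T$. The key auxiliary fact is $H\cap K=\{0\}$: writing $A=(A\cup\{x\})\cap(A\cup\{y\})$ realises $A$ as an intersection of an $H$-periodic and a $K$-periodic set, hence $A$ is $(H\cap K)$-periodic, and then $\{x\}=(A\cup\{x\})\setminus A$ is $(H\cap K)$-periodic as well, forcing $|H\cap K|=1$. Since $T\supseteq(x+K)\cup(y+H)$ and $(x+K)\cap(y+H)$ is either empty or a coset of $H\cap K=\{0\}$, we obtain $|T|\geq|H|+|K|-1$; while the two representations of $T$ give $|T|\leq\min(|H|,|K|)+1$. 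Together these force $\max(|H|,|K|)\leq 2$, contradicting $|H|\geq 3$. The main obstacle is simply the careful bookkeeping: one must verify that $x+K$ and $y+H$ really do avoid $A$ so that the full set identity collapses to the clean identity for $T$, after which the cardinality contradiction is a one-line count.
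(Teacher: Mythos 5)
Your argument is correct and complete. The paper itself does not prove this lemma --- it is quoted from \cite{quasi-kemp} (observation (c.5)) with a pointer to \cite[Proposition 5.2]{phd-thesis} for details --- so there is no in-text proof to compare against; your write-up supplies a self-contained one. The key steps all check out: $A+H$ must equal $A\cup\{x\}$ (since $A+H=A$ would make $x+H$ an $H$-coset disjoint from $A$ inside $A\cup\{x\}$, forcing $|H|=1$); the symmetric identity $A+K=A\cup\{y\}$; the disjointness of $x+K$ and $y+H$ from $A$ via $x\notin A+K=A\cup\{y\}$ and $y\notin A+H=A\cup\{x\}$; the triviality of $H\cap K$ from the $(H\cap K)$-periodicity of $A=(A\cup\{x\})\cap(A\cup\{y\})$ and hence of the singleton $\{x\}$; and the final count $|H|+|K|-1\leq |T|\leq \min\{|H|,|K|\}+1$, which forces $\max\{|H|,|K|\}\leq 2$ and contradicts $|H|\geq 3$. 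One could shortcut the ending slightly --- once $T=\{y\}\cup(x+K)=\{x\}\cup(y+H)$ is in hand, $y+H\subseteq T$ with $|y+H|=|H|\geq 3$ already forces $|T\cap(x+K)|\geq |H|-1\geq 2$, and a similar pigeonhole gives the contradiction without invoking $H\cap K=\{0\}$ --- but your route is clean and every auxiliary claim is justified.
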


We now proceed with the proof of our main result.

\begin{proof}[Proof of \autoref{main}]
In view of \eqref{invar-S-shift} and \eqref{invariant-W-shift}, our problem is invariant when translating $S$ or $W$, so we may w.l.o.g.~assume $0\in \supp(S)$ is a term with maximum multiplicity $\vp_0(S)=\mathsf h(S)$.
For $|G|\leq 4$, the theorem is quickly verified by an exhaustive enumeration of all possible sequences.
Likewise when $|S|\leq 2$, while the case $|S|=3$ follows from \autoref{key-lemma}(ii)--(iii). Therefore we may assume $$|G|\geq 5\quad\und\quad |S|\geq 4$$ and
proceed by a double induction on $(|G|,|S|)$, assuming the theorem proved for any sequence over a smaller cardinality subgroup as well as any sequence over $G$ with smaller length than $S$.

In view of \eqref{invar-S-shift},  we see that if $(-g'+S)=0^{|G|-2}(g)(-g)$, for some $g,\,g'\in G$ with $\ord(g)=|G|$, then
$W\odot S=G\setminus\lbrace\frac12(|G|-1)|G|g'\rbrace$; in particular, $W\odot S$ contains every generator $h\in G$ in view of $|G|\geq 3$. Thus the latter conclusions of (ii) are simple consequences of the structural characterization of $S$ given there.

Next let us show that the structural characterization from the third part of the theorem implies the second part of the theorem. Indeed, if $|S|=|G|+1$ and $W'\odot S0^{-1}\neq G$, then recalling that $|G|\geq 5$ and applying the characterization to $S0^{-1}$ yields  $S={g'}^{|G|-2}(g'+g)(g'-g)0$ for some $g,\,g'\in G$ with $\ord(g)=|G|$.
Since $\ord(g)=|G|\geq 5$, we have $(g'+g)\neq (g'-g)$. Thus, if $g'\neq 0$, then $|G|-2\leq \mathsf h(S)=\vp_0(S)\leq 2$, contradicting that $|G|\geq 5$. Therefore we conclude that $S={0}^{|G|-1}(g)(-g)$ with $\ord(g)=|G|$, and now clearly the subsequence $S'=0^{|G|-1}g$ has $W'\odot S'=G$.
So we see that it suffices to prove the first and third parts of the theorem. In particular, we can assume $|S|\leq |G|$ and we need to show either $|W\odot S|\geq |S|$ or else $|S|=|G|$ with $S$ being described by (ii).

\subsection*{Case 1:} $|\supp(S)|=2$.

In this case, in view of $\langle\supp(S)\rangle=G$, we have $S=0^{|S|-\alpha}g^\alpha$ with $\ord(g)=|G|$ and $1\leq \alpha\leq |S|-1\leq |G|-1$.
As a result, it is easily seen that $W\odot S$ is an arithmetic progression with difference $g$ and length $$|W\odot S|=\min\{|G|,\,|\Sigma_\alpha([0,|S|-1])|\}=\min\{|G|,\,\alpha |S|-\alpha^2+1\}\geq |S|,$$ where the final equality follows in view of $1\leq \alpha\leq |S|-1\leq |G|-1$. Thus $|W\odot S|\geq |S|$, as desired. This completes Case 1.

\subsection*{Case 2:} $\h(S)\geq |S|-2$.

Since $\langle\supp(S)\rangle=G$ with $|G|\geq 5$, we trivially have $\h(S)\leq |S|-1$. If $\h(S)= |S|-1$, then $\langle\supp(S)\rangle=G$ and $\vp_0(S)=\mathsf h(S)$ ensure that $S=0^{|S|-1}g$ with $\ord(g)=|G|$, and now Case 1 completes the proof. So it remains to consider $\h(S)=|S|-2$ for Case 2. In this case, $S=0^{|S|-2}xy$ with $x,\,y\in G\setminus\{0\}$. In view
 of Case 1, we may assume $x\neq y$. Note \be\label{dealth}(W(|S|-1)^{-1}\odot T)\cup (W0^{-1}\odot T)\subseteq W\odot S,\ee where $T:=xy\in \Fc(G)$.
\autoref{key-lemma}(iii) and $|S|\geq 4$ together imply that $$|(W(|S|-1)^{-1})\odot T|\geq \min\{|G|-1,\,2|W|-4\}=\min\{|G|-1,\,2|S|-4\}=\min\{|G|-1,\,|S|\}.$$ In consequence, if $|S|\leq |G|-1$, then the proof is complete, so we assume $|S|=|G|$. In this case, we have $$|W(|S|-1)^{-1}\odot T|\geq |G|-1$$ and likewise $|W0^{-1}\odot T|\geq |G|-1$. Combined with \eqref{dealth}, we once more obtain the desired conclusion $W\odot S=G$ unless $W0^{-1}\odot T=W(|S|-1)^{-1}\odot T$ with $|W0^{-1}\odot T|=|G|-1$. In particular, $W0^{-1}\odot T$ is aperiodic, in which case \eqref{invariant-W-shift} shows that $W0^{-1}\odot T=W(|S|-1)^{-1}\odot T$ is only possible if $\sigma(T)=x+y=0$. Thus $y=-x$.
We now know $S=0^{|G|-2}x(-x)$. Hence, since $\langle \supp(S)\rangle=G$, we conclude that $x$ generates $G$, whence $G$ is cyclic with  $\ord(x)=|G|$, which gives the desired conclusion of (ii). This completes Case 2.

\subsection*{Case 3:} There exists a
subsequence $T\mid S$ with $\langle \supp(T)\rangle_*=H$, where $H<G$ is a proper, nontrivial subgroup, and either  $|T|\geq |H|+1$
(if $|H|\geq 3$) or $|T|\geq |H|$ (if $|H|=2$).


Let $W_T=(0)(1)\cdot\ldots\cdot (|H|-1)\in \Fc(\Z)$. By induction hypothesis, we can apply the theorem to $T$ to conclude that $W_{T}\odot T'$ is an $H$-coset for some subsequence $T'\mid T$ with $|T'|=|H|$. By translating appropriately, we can w.l.o.g.~assume $0\in \supp(T')$, though we may lose that $\h(S)=\vp_0(S)$. Let $$\la \supp(\phi_H(S{T'}^{-1}))\ra_*=K/H,\quad \mbox{ where }\; H\leq K\leq G.$$ Then all terms of $\phi_H(S{T'}^{-1})$ are contained in a single $K/H$-coset, say $\supp\{\phi_K(S{T'}^{-1})\}=\{\phi_K(\alpha)\}$, where $\alpha\in G$.
Consequently, since $\langle \supp(S)\rangle_*=\langle\supp(S)\rangle=G$, so that $\langle \supp(\phi_K(S))\rangle=G/K$, and since $\supp(T')\subseteq H\subseteq K$, so that
$\supp(\phi_K(T'))=\{0\}$, it follows that  \be\label{spaff}\langle \phi_K(\alpha)\rangle=G/K.\ee

If $T\neq T'$, which holds whenever $|H|\geq 3$, then it follows in view of $\supp(\phi_H(T))=\{0\}$ that  $\supp(\phi_H(S{T'}^{-1}))=\supp(\phi_H(S))$, whence $\langle \supp(\phi_H(S{T'}^{-1}))\rangle_*=\langle \supp(\phi_H(S))\rangle_*=G/H$. In summary, \be\label{KG} K=G\quad\mbox{ when }\;T'\neq T\; \mbox{ or }\; |H|\geq 3.\ee


Next, let us show that \be\label{H-biggish} |W\odot S|\geq 2|H|.\ee
If $|\supp(\phi_H(S{T'}^{-1}))|\geq 2$, then $|WW_T^{-1}\odot \phi_H(S{T'}^{-1})|\geq 2$, which combined with the fact that $W_T\odot T'$ is an $H$-coset yields \eqref{H-biggish}. Therefore assume instead $\supp(\phi_H(S{T'}^{-1}))=\{\phi_H(\beta)\}$, where $\beta\in \supp(S{T'}^{-1})$. Since $\supp(T)\subseteq H$, if $\phi_H(\beta)=0$, then $\supp(S)\subseteq H<G$ follows, contradicting that $\la \supp(S)\ra=G$. Therefore $\phi_H(\beta)\neq 0$. However, if $|H|\geq 3$, then $S{T'}^{-1}$ contains a term from $T$, and thus a term from $H$, in which case $\phi_H(\beta)=0$, contrary to what we just noted. Therefore we can now assume $|H|=|T|=2$ for proving \eqref{H-biggish}.
Now
$(x+W_T)\odot T'=H$ for all $x\in [0,|S|-2]$. Thus, if \eqref{H-biggish} fails, then we must have \be\label{cupsache}|\bigcup_{x\in [0,|S|-2]}W(x+W_T)^{-1}\odot \phi_H(\beta)^{|S|-2}|=1.\ee As a result, since $|S|\geq 4$, comparing the values $x=0$ and  $x=1$  in \eqref{cupsache} shows that  $$(\frac{(|S|-1)|S|}{2}-1)\phi_H(\beta)=(\frac{(|S|-1)|S|}{2}-3)\phi_H(\beta),$$ whence $2\phi_H(\beta)=0$. However, since $\supp(\phi_H(S))=\{0,\phi_H(\beta)\}$ must generate $G/H$, this implies that $|G|=|G/H|\cdot |H|=2\cdot 2=4$, contradicting the assumption  $|G|\geq 5$. Thus \eqref{H-biggish} is established in all cases.

We can assume \be\label{S-upper-H} 2\leq |H|\leq \frac{|S|-1}{2},\ee else the desired conclusion $|W\odot S|\geq |S|$  follows from \eqref{H-biggish}.
We divide the remainder of the case into several subcases.

\subsection*{Subcase 3.1:} $K=G$ and $|S|\geq |H|+|G/H|+1$.

In this case, we can apply the induction hypothesis to $\phi_H(S{T'}^{-1})$ to conclude that
 $$(WW_T^{-1})\odot \phi_H(S{T'}^{-1})=G/H.$$ Hence, since $W_T\odot {T'}$ is an $H$-coset, it follows that
 $G=(WW_T^{-1})\odot (S{T'}^{-1})+W_T\odot T'\subseteq W\odot S$, as desired.

\subsection*{Subcase 3.2:} $|S|\leq |H|+|K/H|-1+\epsilon$, where $\epsilon=0$ if $|K/H|\geq 3$ and $\epsilon=1$ if $|K/H|\leq 2$.

In this case, we can apply the induction hypothesis to $WW_T^{-1}\odot \phi_H(S{T'}^{-1})$, recall that $W_T\odot T'$ is an $H$-coset, and use the bounds given by \eqref{S-upper-H} to conclude  that \be\label{fisht}|W\odot S|\geq |H|(|S|-|T'|)=|H|(|S|-|H|)\geq \min\{2|S|-4,\; \frac{|S|^2-1}{4}\}.\ee If the theorem fails for $S$, then  $|W\odot S|\leq |S|-1$, which combined with \eqref{fisht} yields the contradiction $|S|\leq 3$.


\subsection*{Subcase 3.3:} $|S|=|H|+|K/H|$.

In view of Subcase 3.2, we can assume $|K/H|\geq 3$, whence $|K|\geq 3|H|\geq 6$.
Applying the induction hypothesis to $WW_T^{-1}\odot \phi_H(S{T'}^{-1})$ and recalling that $W_T\odot T'$ is an $H$-coset, we conclude  that \be\label{fishs}|W\odot S|\geq |H|(|K/H|-1)=|K|-|H|.\ee If the theorem fails for $S$, then  $|W\odot S|\leq |S|-1=|H|+|K/H|-1$, which combined with \eqref{fishs} yields $$|K|\leq 2|H|+|K/H|-1.$$ However, in view of $2\leq |H|\leq \frac{|K|}{3}$, the above is only possible if $|K|=6$ and $|H|=2$. In this case, equality must hold in \eqref{fishs}, which is only possible (in view of $|K/H|=3$ and the characterization given by (ii)) if the $3$ terms of $\phi_H(S{T'}^{-1})$ are the $3$ distinct elements of some cardinality $3$ coset $\phi_H(\beta)+K/H$, where $\beta\in G$. Let $K/H=\{0,\phi_H(g),2\phi_H(g)\}$, where  $\ord(\phi_H(g))=3$ and $g\in G$, so that $$\phi_H(S{T'}^{-1})=\phi_H(\beta)\phi_H(\beta+g)\phi_H(\beta+2g).$$
Since $3\equiv 1\mod 2$, we have $(0)(3)\odot T'=H$, while $$(1)(2)(4)\odot \phi_H(S{T'}^{-1})=(1)(2)(4)\odot \phi_H(\beta)\phi_H(\beta+g)\phi_H(\beta+2g)=7\phi_H(\beta)+\{0,\phi_H(g),2\phi_H(g)\}$$ is a full $K/H$-coset, whence  $$7\beta+K=(0)(3)\odot T'+(1)(2)(4)\odot S{T'}^{-1}\subseteq W\odot S.$$ Thus $|W\odot S|\geq |K|=6>|S|$, as desired, which completes the subcase.

\medskip

Observe that Subcases 3.1--3.3 cover all possibilities when $K=G$. Thus it remains to consider the case when $K<G$ is proper, in which case \eqref{KG} shows $|H|=2$. Note that the following subcase covers all remaining possibilities.

\subsection*{Subcase 3.4:} $K<G$ is proper and  $|S|\geq |H|+|K/H|+1=|K/H|+3$.

In view of \eqref{KG}, we conclude there must be precisely $2$ terms of $S$ from $H$ for this subcase, else $T\neq T'$ and $K=G$ follows, contrary to subcase hypothesis.

Suppose $|S|\geq |H|+2|K/H|+1=|K|+3$. Then $|S{T'}^{-1}|\geq 2|K/H|+1=|K|+1\geq 3$. Recall that all terms of $S{T'}^{-1}$ are from the $K$-coset $\alpha+K$. Thus $\la\supp(S{T'}^{-1})\ra_*\leq K<G$. Hence, if $\la\supp(S{T'}^{-1})\ra_*$ is nontrivial, then, in view of $|S{T'}^{-1}|\geq |K|+1\geq 3$, we see that the hypotheses of Case 3 but not Subcase 3.4 hold using $S{T'}^{-1}$ and $\la\supp(S{T'}^{-1})\ra_*$ in place of $T$ and $H$, whence one of the previous subcases can be applied to complete the case. On the other hand, if $\la\supp(S{T'}^{-1})\ra_*$ is trivial, say w.l.o.g.~$S{T'}^{-1}=\alpha^{|S|-2}$, then we can translate $S$ so that $S=0^{|S|-2}xy$ and apply Case 2 to complete the subcase. So we may instead assume \be\label{almost}|S|\leq |K|+2.\ee

Since $|S{T'}^{-1}|=|S|-|H|\geq |K/H|+1$ holds by hypothesis, we can apply the induction hypothesis to $WW_T^{-1}\odot \phi_H(S{T'}^{-1})$ and recall that $W_T\odot T'$ is an $H$-coset to thereby conclude that \be\label{wiw}|W\odot S|\geq |K|.\ee If the theorem fails for $S$, then we must have $|W\odot S|\leq |S|-1$, which, in view of \eqref{almost} and \eqref{wiw}, is only possible if \be\label{lamset}2|K/H|+1=|K|+1\leq |S|\leq |K|+2.\ee
From \eqref{S-upper-H}, we have $|S|\geq 2|H|+1$, which combined with \eqref{lamset} implies that $|K/H|\geq 2$. 

Recall that $\supp(S{T'}^{-1})\subseteq \alpha+K$. Since $|K/H|\geq 2$, we infer from \eqref{lamset} that $|\phi_H(S{T'}^{-1})|\geq |K/H|+1$, whence applying the induction hypothesis to $\phi_H(S{T'}^{-1})$  shows that there exists a subsequence $R\mid S{T'}^{-1}$ with $|R|=|K/H|$ such that  $W'\odot \phi_H(R)$ is a $K/H$-coset for any sequence $W'$ consisting of $|K/H|$ consecutive integers.

Recall that $|K|\geq |H|\geq 2$. Thus, if $|W\odot S|\geq 2|K|$, then combining this with \eqref{almost} shows that $|W\odot S|\geq |S|$, as desired. Therefore we conclude that \be\label{jih}|W\odot S|<2|K|.\ee In view of the subcase hypothesis, $S{T'}^{-1}R^{-1}$ is a nontrivial sequence, so we may find some $g\in \supp(S{T'}^{-1}R^{-1})$. Since $(0)(1)\odot T'=H$ and $(2)(3)\cdot\ldots (|K/H|+1)\odot \phi_H(R)$ is a $K/H$-coset, we conclude that $$(0)(1)\cdot\ldots\cdot (|K/H|+2)\odot T'Rg$$ contains the full $K$-coset \be\label{coset1}\left(\frac{(|K/H|+1)(|K/H|+2)}{2}-1\right)\alpha+(|K/H|+2)g+K.\ee Likewise, since
$(1)(2)\odot T'=H$ and $(3)(4)\cdot\ldots (|K/H|+2)\odot \phi_H(R)$ is a $K/H$-coset, we conclude that $$(0)(1)\cdot\ldots\cdot (|K/H|+2)\odot T'Rg$$ also contains the full $K$-coset \be\label{coset2}\left(\frac{(|K/H|+2)(|K/H|+3)}{2}-3\right)\alpha+K.\ee As all terms of $S{T'}^{-1}$ are from $\alpha+K$, we have $\phi_K(\alpha)=\phi_K(g)$, while in view of $|W\odot W|<2|K|$, both $K$-cosets given in \eqref{coset1} and \eqref{coset2} must be equal; which implies  $2\phi_K(\alpha)=0$. As a result, we derive from \eqref{spaff} and $K<G$ that $|G/K|=2$.

If $|K/H|\leq 2$, then $|G|=|G/K||K/H|\leq 2\cdot 2=4$, contrary to assumption. Therefore we now conclude that $|K/H|\geq 3$. Next observe that $$(0)(2)\odot T'+(1)(3)(4)\cdot\ldots\cdot (|K/H|+2)\odot Rg\subseteq \left(\frac{(|K/H|+2)(|K/H|+3)}{2}-2\right)\alpha+K,$$  which is a $K$-coset disjoint from that of \eqref{coset2}. Consequently, \be\label{sclose}|W\odot S|\geq |K|+|(0)(2)\odot T'+(1)(3)(4)\cdot\ldots\cdot (|K/H|+2)\odot Rg|.\ee However, $(3)(4)\cdot\ldots\cdot (|K/H|+2)\odot \phi_H(R)$ is a full $K/H$-coset (as previously derived by use  of the induction hypothesis to define $R$), which readily implies that  $$|(0)(2)\odot T'+(1)(3)(4)\cdot\ldots\cdot (|K/H|+2)\odot Rg|\geq |K/H|\geq 3.$$ Combined with \eqref{sclose} and \eqref{lamset}, we conclude that $|W\odot S|\geq |K|+3\geq |S|+1$, as desired. This completes the final subcase of Case 1.
 For the remainder of the arguments, we return to considering $S$ translated so that $\vp_0(S)=\h(S)$.

\subsection*{Case 4:} $\frac13(|S|+2)\leq\h(S)\leq |S|-3$.

Note that the case hypothesis implies $|S|\geq 6$.
If $g\in \supp(S)$ is nonzero with $d:=\ord(g)\leq \lceil \frac13(|S|+2)\rceil$, then $0^dg\in \Fc(G)$ is a subsequence of $S$ with length $|0^dg|=d+1=|\langle g\rangle|+1\leq |S|\leq |G|$; moreover, $\langle \supp(0^dg)\rangle_*$ is equal to the proper (since the previous inequality implies $d<|G|$), nontrivial subgroup $\langle g\rangle$. Consequently,  Case 3 can be invoked to complete the proof. Therefore we instead conclude that \be\label{high-order}\ord(g)\geq \lceil \frac13(|S|+2)\rceil+1\quad \mbox{ for all nonzero } \;g\in \supp(S).\ee

Since $\vp_0(S)\leq |S|-3$, choose some nonzero $x\in \supp(S)$. In view of Case 1, we have $|\supp(S)|\geq 3$, whence there must be some other nonzero $y\in \supp(S)$ with $x\neq y$. If, for every such nonzero $y\in \supp(S)$ with $x\neq y$, we have $y\in\la x\ra$, then $\la x\ra=\la \supp(S)\ra=G$. Otherwise, we can find some nonzero $y\in \supp(S)$ with $x\neq y$ and $\la x,y\ra>\la x\ra$. As a result, choosing the nonzero $y\in \supp(S)\setminus\{0,x\}$ appropriately and setting $K_1=\langle x,y\rangle$, we obtain \be \label{K-size}|K_1|=|\langle x,y\rangle|\geq \min\{|G|,\,2\ord(x)\}\geq \min\{|G|,\,2\lceil \frac13(|S|+2)\rceil+2\},\ee where the latter bound follows from \eqref{high-order}.

Let $R_1=0^{\lceil\frac13(|S|+2)\rceil-1}xy$. In view of the case hypothesis, we see that $R_1\mid S$ with $0\in \supp(SR_1^{-1})$. Let $R_2=SR_1^{-1}$, so that, in view of $|S|\geq 6$ and the previous observation, we have \be\label{zeroisin}0\in \supp(R_1)\cap \supp(R_2).\ee Let $K_2=\la \supp(R_2)\ra_*=\la \supp(R_2)\ra$. In view of \eqref{zeroisin}, we also have $K_1=\la \supp(R_1)\ra_*=\la \supp(R_1)\ra$. Observe that \be\label{K+KisG}K_1+K_2=\la \supp(S)\ra=G.\ee
From the case hypothesis $\vp_0(S)\leq |S|-3$ and \eqref{zeroisin}, we see that $|\supp(R_2)|\geq 2$, whence $K_2$ is nontrivial. If $|K_2|\leq |R_2|-1\leq |S|-1\leq |G|-1$, then $K_2$ will be proper and $R_2$ will be a sequence of length at least $|K_2|+1$ all of whose terms come from the coset $0+K_2$, whence Case 1 completes the proof. Therefore we can assume \be \label{k2-big} |K_2|\geq |R_2|=|S|-|R_1|=|S|-\lceil\frac13(|S|+2)\rceil-1=\lfloor\frac{2|S|-5}{3}\rfloor.\ee
From \eqref{high-order}, we also have \be\label{k2-big-smallvaluess}|K_2|\geq  \lceil \frac13(|S|+2)\rceil+1.\ee

Let $A_1=(0)(1)\cdot\ldots\cdot (|R_1|-1)\odot R_1$ and let
$A_2=(|R_1|)(|R_1|+1)\cdot\ldots\cdot (|S|-1)\odot R_2$.
In view of \autoref{lemma-generation}, we have
$\la A_1\ra_*=\la\supp(R_1)\ra_*=K_1$ and $\la A_2\ra_*=\la \supp(R_2)\ra_*=K_2$.
Also, from their definition, we have \be A_1+A_2\subseteq W\odot S\label{A+Abound}.\ee
Since $|\supp(R_2)|\geq 2$, it is readily deduced that $|A_2|\geq 2$.
In consequence, if $|A_1|\geq |G|-1$, then applying \autoref{pigeonhole-lemma} to
$A_1+A_2$ shows that $A_1+A_2=G$, which in view of \eqref{A+Abound} completes the proof.
Therefore we can assume $|A_1|\leq |G|-2$. Consequently, in view of
$\la\supp(R_1)\ra_*=K_1$ and \eqref{K-size}, applying  \autoref{key-lemma}(iii) to $R_1$
 results in \be|A_1|\geq 2|R_1|-2=2\lceil \frac13(|S|+2)\rceil.\label{A1isbig}\ee
 Since $|R_2|<|S|$, we can apply the induction hypothesis to $R_2$ to yield
 \be\label{A2isbig}|A_2|\geq \min\{|K_2|-1,\,|R_2|\}=\min\{|K_2|-1,\,\lfloor\frac{2|S|-5}{3}
 \rfloor\}\geq \lfloor\frac{2|S|-5}{3}\rfloor-1,\ee where the final inequality follows from
 \eqref{k2-big}.

If  $|A_1+A_2|\geq |A_1|+|A_2|-1$, then \eqref{A1isbig}, \eqref{A2isbig} and \eqref{A+Abound} together yield $$|W\odot S|\geq 2|R_1|-2+|R_2|-2=|S|+|R_1|-4=|S|+\lceil \frac13(|S|+2)\rceil-3,$$ which is at least $|S|$ for $|S|\geq 6$, as desired. So we can instead assume \be\label{kt-canbeapplied} |A_1+A_2|<|A_1|+|A_2|-1.\ee


Let $H=\mathsf H(A_1+A_2)$ be the maximal period of $A_1+A_2$. In view of \eqref{kt-canbeapplied} and Kneser's Theorem, it follows that $H$ is a proper (else $W\odot S=G$, as desired), nontrivial subgroup with \be\label{ktapplied} |\phi_H(A_1+A_2)|\geq |\phi_H(A_1)|+|\phi_H(A_2)|-1.\ee
We divide the remainder of the case into several subcases.

\subsection*{Subcase 4.1:} $|\phi_H(A_1)|=|\phi_H(A_2)|=1$.

In this case, $K_1=\langle A_1\rangle_*\leq H$ and $K_2=\langle A_2\ra_*\leq H$, whence $G=K_1+K_2\leq H$ follows from \eqref{K+KisG}, contradicting that $H<G$ is proper.



\subsection*{Subcase 4.2:} $|\phi_H(A_1)|\geq 2$ and $|\phi_H(A_2)|=1$.

In this case, $K_2=\langle A_2\rangle_*\leq H$ and $|A_1+A_2|\geq 2|H|\geq 2|K_2|$, which is at least $\frac43 |S|-\frac{14}{3}$ in view of \eqref{k2-big}. For $|S|\geq 12$, combing this with \eqref{A+Abound} implies $|W\odot S|\geq |A_1+A_2|>|S|-1$, as desired. For $|S|\leq 11$, we can use \eqref{k2-big-smallvaluess} and \eqref{A+Abound} to estimate $|W\odot S|\geq |A_1+A_2|\geq 2|K_2|\geq \frac23 |S|+\frac{10}{3}>|S|-1$, also as desired.



\subsection*{Subcase 4.3:} $|\phi_H(A_2)|\geq 2$.

In this case, \eqref{ktapplied} and \eqref{A+Abound} imply $$|W\odot S|\geq |A_1+A_2|\geq |A_1+H|+|A_2+H|-|H|\geq |A_1+H|+\frac12|A_2+H|\geq |A_1|+\frac12|A_2|.$$ Combined with \eqref{A1isbig} and \eqref{A2isbig}, we obtain $$|W\odot S|\geq 2\lceil \frac13(|S|+2)\rceil+\frac12(\lfloor\frac{2|S|-5}{3}\rfloor-1)> |S|-1,$$ as desired, which completes the last subcase of Case 4.

\subsection*{Case 5:} $\h(S)\leq \frac13(|S|+1)$.

Let $$\epsilon =\left\{
                 \begin{array}{ll}
                   1, & \hbox{if } |S|\equiv 2\mod 3 \\
                   0, & \hbox{else}
                 \end{array}
               \right.$$
and let $r=\lfloor\frac13(|S|+1)\rfloor$. Note $r\geq 1$ in view of $|S|\geq 4$. We assume by contradiction that $S$ fails to satisfy the theorem (solely for the statements of the properties below, which might not hold if $S$ satisfied the conditions of the theorem).

The assumption $\h(S)\leq \frac13(|S|+1)$ allows us to factorize the sequence $S$ into square-free subsequences in the following way (this is the basic construction for the existence of an $r$-setpartition; see \cite{egzII}):\begin{itemize}\item If $|S|\equiv 0\mod 3$, then $r=\frac13|S|$, $\epsilon=0$, and we can factorize $S=S_1\cdot\ldots\cdot S_r$ such that $|\supp(S_i)|=|S_i|=3$ for  all $i\in [1,r]$.
\item If $|S|\equiv 1\mod 3$, then $r=\frac13(|S|-1)$, $\epsilon=0$, and we can factorize $S=S_1\cdot\ldots\cdot S_rS_{r+1}$ such that $|\supp(S_i)|=|S_i|=3$ for  all $i\in [1,r]$ and $|\supp(S_{r+1})|=|S_{r+1}|=1$.
\item If $|S|\equiv 2\mod 3$, then $r=\frac13(|S|+1)$, $\epsilon=1$, and we can factorize $S=S_1\cdot\ldots\cdot S_r$ such that $|\supp(S_i)|=|S_i|=3$ for  all $i\in [1,r-1]$ and $|\supp(S_r)|=|S_r|=2$.\end{itemize}
Note $\epsilon$ counts the number of $S_i$ with length $2$ in the factorization.
For the purposes of the proof, we will refer to a factorization $S_1\cdot\ldots\cdot S_r$ (of $S$ or $SS_{r+1}^{-1}$) as \emph{well-balanced} if it satisfies the above criteria and also has $|\langle \supp(S_j)\rangle_*|\geq 5$ for any $S_j$ with $|S_j|\geq 3$. Let us show that such a factorization exists.

\medskip

Let $S_1\cdot\ldots\cdot S_r\mid S$ be a factorization satisfying the appropriate bulleted criteria above. We trivially have $|\langle \supp(S_j)\rangle_*|\geq 3$ for each $S_j$ with $|S_j|=|\supp(S_j)|=3$. If $|\langle \supp(S_j)\rangle_*|=4$, then the pigeonhole principle guarantees that there are distinct $x,\,y\in \supp(S_j)$ with $\ord(x-y)=2$, whence invoking Case 3 with $H=\langle x-y\rangle$ shows that the theorem holds for $S$, contrary to assumption. Therefore, we see that $|\langle \supp(S_j)\rangle_*|\geq 5$ or $|\langle \supp(S_j)\rangle_*|=3$ for each $S_j$ with $|S_j|=3$. Consider a factorization $S_1\cdot\ldots\cdot S_r\mid S$ satisfying the appropriate bulleted criteria so that the number of $S_j$ with $|S_j|=|\langle \supp(S_j)\rangle_*|=3$ is minimal. If by contradiction no well-balanced factorization exists, then there will be some $S_j$ with $|S_j|=|\langle \supp(S_j)\rangle_*|=3$. Thus $\supp(S_j)$ is a coset of the cardinality $3$ subgroup $H:=\langle \supp(S_j)\rangle_*$. In view of $|S|\geq 4$, there is some $S_k$ with $k\in [1,r+1]$, $k\neq j$, and $k=r+1$ only if $|S|=4\equiv 1\mod 3$. If $\supp(S_k)$ and $\supp(S_j)$ share a common element, then there will be $4$ terms of $S$ from the same cardinality three $H$-coset, whence invoking Case 3 shows that the theorem holds for $S$, contrary to assumption.  Therefore we may instead assume that $\supp(S_k)$ and $\supp(S_j)$ are disjoint. Thus if we swap any term $x$ from $S_j$ for a term $y$ from  $S_k$ and let $S'_j=S_jx^{-1}y$ and $S'_k=S_ky^{-1}x$ denote the resulting sequences, then \autoref{punctured-lem} guarantees that $\supp(S'_j)$ cannot be periodic. In particular, $\supp(S'_j)$ is not a coset of  a cardinality $3$ subgroup. If $\supp(S'_k)$ is also not a coset of a cardinality three subgroup, then set $S''_j=S_j$ and $S''_k=S'_k$. On the other hand, if $\supp(S'_k)$ is a coset of a cardinality $3$ subgroup, then \autoref{punctured-lem} again shows that $S''_k:=S'_k{y'}^{-1}y$ is not periodic, and thus not coset of cardinality $3$ subgroup, where $y'$ is any element from $\supp(S_k)$ distinct from $y$. Moreover, we also have $S''_j:=S'_jy^{-1}y'=S_jx^{-1}y'$ not being a coset of a cardinality three subgroup (by repeating the arguments used to show this for $S'_j$ only using $y'$ instead of $y$). However, now the factorization $S_1\cdot\ldots\cdot S_rS_j^{-1}S_k^{-1}S''_jS''_k$ satisfies the appropriate bulleted condition and also has at least one less $S_j$ with $|S_j|=|\langle \supp(S_j)\rangle_*|=3$, contradicting the assumed minimality assumption. This shows that a well-balanced factorization $S_1\cdot\ldots\cdot S_r$ exists.

\medskip

For the moment, let $S_1\cdot\ldots\cdot S_r\mid S$ be an arbitrary well-balanced factorization. Let $W=W_1\cdot \ldots\cdot W_r$ be a factorization of $W$ with $|W_i|=|S_i|$ for all $i\in [1,r]$ such that each $W_i$ is a sequence of consecutive integers. Note we can apply \autoref{special-lemma} to each $S_j$ with $|S_j|=3$ since the definition of a well-balanced factorization ensures that $|\langle \supp(S_j)\rangle_*|\geq 5$ while we have $\ord(x-y)\geq 3$ for all distinct $x,\,y\in \supp(S_j)$, else Case 3 applied with $H=\langle x-y\rangle$ shows that the theorem holds for $S$, contrary to assumption. For each $S_j$ with $|S_j|=3$, let $A_j\subseteq W_j\odot S_j$ be the resulting subset with \be\label{condition-list}|A_j|=4,\quad \langle A_j\rangle_*=\langle \supp(S_j)\rangle_*,\quad \mbox{ and either }\quad \langle A_j\rangle_*\cong C_6 \quad \mbox{ or } \quad  |\mathsf H(A_j)|\neq 2.\ee For any $S_j$ with $|S_j|\neq 3$, let $A_j=W_j\odot S_j$. If $|S|\not\equiv 1\mod 3$, set $A_{r+1}=\{0\}$. Note that $|A_{r+1}|=1$ (regardless of the value of $|S|$ modulo $3$) and that $|A_r|=2$ when $|S_r|=2$.
We also have  \be\label{stunt} \Sum{i=1}{r+1}A_i\subseteq W\odot S.\ee
For the purposes of the proof, we will refer to a setpartition $\mathscr A=A_1\cdot\ldots\cdot A_rA_{r+1}$ obtained as above from a well-balanced factorization $S_1\cdot\ldots\cdot S_r\mid S$ as a \emph{well-balanced} setpartition.

Our plan is to show that a well-balanced setpartition with maximal cardinality sumset has $|\Sum{i=1}{r+1}A_i|\geq |S|$, which in view of \eqref{stunt} will yield the concluding contradiction $|W\odot S|\geq |S|$. To do this, we must first establish some properties that any well balanced setpartition has. We begin with the following.

\subsection*{Property 1:} If $A_1\cdot\ldots\cdot A_rA_{r+1}$ is a well-balanced setpartition
and \be\label{toosmall} |\Summ{i\in I}A_i|<\Summ{i\in I}|A_i|-|I|+1,\ee where
$I\subseteq [1,r]$ is a nonempty subset, then $|\mathsf H(\Summ{i\in I}A_i)|\geq 5$.

Let $H=\mathsf H(\Summ{i\in I}A_i)$ and suppose by contradiction that $|H|\leq 4$. In view of \eqref{toosmall} and Kneser's Theorem, we know $|H|\geq 2$ with
$$|\Summ{i\in I}A_i|\geq \Summ{i\in I}|A_I|-|I|+1-(|I|-1)(|H|-1)+\rho,$$ where
$\rho=\Summ{i\in I}(|A_i+H|-|A_i|)$ denotes the number of $H$-holes in the $A_i$ with $i\in I$. In particular, \be\label{holebound}\rho<(|I|-1)(|H|-1).\ee

Suppose $|H|\in \{3,4\}$. Now all but at most one $A_i$ with $i\in I\subseteq [1,r]$ has $|A_i|=4$. Since $|\langle A_i\rangle_*|=|\la \supp(S_i)\ra_*|\geq 5>|H|$ for such $A_i$, we know that each such $A_i$ intersects at least two $H$-cosets, whence $$|A_i+H|-|A_i|\geq 2|H|-4\geq |H|-1.$$ Thus $\rho=\Summ{i\in I}(|A_i+H|-|A_i|)\geq (|I|-1)(|H|-1)$, contradicting \eqref{holebound}. So we may instead assume $|H|=2$.

If $A_i$ is $H$-periodic with $|A_i|=2$, then \autoref{lemma-generation} implies that $S_i$ consists of $2$ distinct elements from the same cardinality $2$ $H$-coset, whence applying Case 3
shows that the theorem holds for $S$, contrary to assumption. Therefore only $A_i$ with $|A_i|=4$ can be $H$-periodic.

If at most one $A_i$ with $i\in I$ is $H$-periodic, then $|A_i+H|-|A_i|\geq 1=|H|-1$ will hold for all but at most one  $i\in I$, and we will again contradict \eqref{holebound}. Therefore there must be at least two $A_i$ with $i\in I$ that are $H$-periodic, and in view of the previous paragraph, we must have $|A_i|=4$ for each such $A_i$. However \eqref{condition-list} shows  this is only possible for $A_i$   if $\langle\supp(S_i)\rangle_*=\langle A_i\rangle_*\cong C_6$, in which case  $A_i$ is a cardinality $4$ subset of a coset of the cardinality $6$ subgroup $\langle A_i\rangle_*$.

Let $J\subseteq I$ be the subset of all those indices $i\in I$ such that $A_i$ is $H$-periodic. Since there are at least two $A_i$ with $i\in I$ and $A_i$ being $H$-periodic, as shown above, we have $|J|\geq 2$. By the argument of the previous paragraph, each $A_i$ with $i\in J$ has $\langle \phi_H(A_i)\rangle_*\cong C_3$. Thus, if  $\langle \phi_H(A_i)\rangle_*=\langle \phi_H(A_j)\rangle_*$ for distinct $i,\,j\in J$, then \autoref{pigeonhole-lemma} implies that $A_i+A_j$ is $\langle A_j\rangle_*$-periodic, contradicting that $H<\langle A_j\rangle_*$ is the maximal period of $\Summ{i\in I}A_i$. Therefore we may assume each $\langle \phi_H(A_i)\rangle_*$, for $i\in J$, is a distinct cardinality $3$ subgroup.
In consequence, we have \be\label{dewdrop}|\phi_H(A_i)+\phi_H(A_j)|=4\quad\mbox{ for }\;i,\,j\in J\;\mbox{ distinct}.\ee
Since $H$ is the maximal period of $\Summ{i\in I}A_i$ and $J\subseteq I$, it follows that $\Summ{i\in J}\phi_H(A_i)$ is aperiodic. Thus, pairing up the $\phi_H(A_j)$ with $j\in J$ into  $\lfloor\frac12|J|\rfloor$ pairs, applying the equality \eqref{dewdrop} to each pair, and then applying Kneser's Theorem to the aperiodic $\lceil\frac12|J|\rceil$-term sumset whose summands consist of the sumsets of each of the $\lfloor\frac12|J|\rfloor$ pairs along with the one unpaired set $\phi_H(A_i)$ with $i\in J$ (if $|J|$ is odd) yields the estimates
 \begin{align}\label{hopper}&|\Summ{i\in J}\phi_H(A_i)|\geq 4\left (\frac{|J|-1}{2}\right)+2-\frac{|J|+1}{2}+1=\frac32|J|+\frac12&\mbox{ if }\; |J|\;\mbox{ is odd,}\\
\nn &|\Summ{i\in J}\phi_H(A_i)|\geq 4\left (\frac{|J|}{2}\right)-\frac12|J|+1=\frac32|J|+1
&\mbox{ if }\; |J|\;\mbox{ is even.}\end{align}

For each $i\in I\setminus J\subseteq [1,r]$, we know $A_i$ is not $H$-periodic. As a result, if $i\in I\setminus J$ with $|A_i|=4$, then $|\phi_H(A_i)|\geq 3$, while if $i\in I\setminus J$ with $|A_i|=2$, then $|\phi_H(A_i)|=2$. Consequently, since $\Summ{i\in I}\phi_H(A_i)$ is aperiodic (as $H$ is the maximal period of $\Summ{i\in I}A_i$), Kneser's Theorem and \eqref{hopper} together imply
\be\label{onethree}|\Summ{i\in I}\phi_H(A_i)|\geq |\Summ{i\in J}\phi_H(A_i)|+\Summ{i\in I\setminus J}|\phi_H(A_i)|-(|I\setminus J|+1)+1\geq \frac32|J|+\frac12+2|I\setminus J|-\epsilon\geq \frac32|I|+\frac12-\epsilon.\ee  Since $\Summ{i\in I}A_i$ is $H$-periodic with $|H|=2$, \eqref{onethree} implies  $|\Summ{i\in I}A_i|\geq 3|I|+1-2\epsilon=\Summ{i\in I}|A_i|-|I|+1$, contradicting \eqref{toosmall} and completing the  proof of Property 1.

\medskip

\bigskip

Next, recalling the definition of $r$, we observe that $$\Sum{i=1}{r}|A_i|-r+1=4r-2\epsilon-r+1=3r-2\epsilon+1\geq |S|.$$ Consequently, in view of \eqref{stunt} and $W\odot S\neq G$, it follows that
\be\label{gogog} |\Sum{i=1}{r}A_i|<\min\{|G|,\;\Sum{i=1}{r}|A_i|-r+1\}.\ee Thus Property 1 ensures that
$H_1:=\mathsf H(\Sum{i=1}{r}A_i)$ has $|H_1|\geq 5$. Since $H_1$ must be a proper subgroup, it follows that $|G|$ is composite with $$|G|\geq 2|H_1|\geq 10.$$ Let $I_1\subseteq [1,r]$ denote all those indices $i\in [1,r]$ such that $|\phi_{H_1}(A_i)|=1$.
Our next goal is the following.

\subsection*{Property 2:} If $A_1\cdot\ldots\cdot A_rA_{r+1}$ is a well-balanced setpartition with $H_1=\mathsf H(\Sum{i=1}{r}A_i)$ and $I_1\subseteq [1,r]$ being the subset of all $i\in [1,r]$ with $|\phi_{H_1}(A_i)|=1$, then  $|I_1|\geq \lceil\frac13(|H_1|-2)\rceil+2$.

First let us handle the case when $|I_1|=r=\lfloor\frac{|S|+1}{3}\rfloor$. In this case, we need to show $|S|\geq |H_1|+5$, for which, in view of $|W\odot S|<|S|$, it suffices to show that $|W\odot S|\geq |H_1|+4$. Since $|\Sum{i=1}{r}A_i|\leq |W\odot S|<|S|$, we have the initial estimate $|S|\geq |H_1|+1$. However,  if $|W\odot S|=|H_1|$, then $\la\supp(S)\ra_*=\la W\odot S\ra_*=H_1<G$ follows from \autoref{lemma-generation}, contradicting the hypothesis $\la \supp(S)\ra_*=G$. Therefore we instead conclude that $|W\odot S|\geq |H_1|+1$, in turn implying \be\label{wheel}|S|\geq |W\odot S|+1\geq |H_1|+2\geq 7.\ee
Since $|I_1|=r$, we know that every $A_i$ with $i\in [1,r]$ is contained in an $H_1$-coset. Consequently, in view of \eqref{condition-list}, we see that each $S_i$ with $i\in [1,r]$ has all its terms from a single $H_1$-coset, say $\supp(S_i)\subseteq \alpha_i+H_1$. If it is the same $H_1$-coset for all $S_i$ with $i\in [1,r]$, then we will have at least $|S|-1\geq |H_1|+1$ terms from the same $H_1$-coset (in view of \eqref{wheel}), whence Case 3 shows that the theorem holds for $S$, contrary to assumption. Therefore we can instead assume $\alpha_j+H_1\neq \alpha_r+H_1$ for some $j\in [1,r-1]$.  Let $g_r\in \supp(S_r)$ and $g_j\in \supp(S_j)$ and define $A'_r=W_r\odot S_rg_r^{-1}g_j$ and $A'_j=W_j\odot S_jg_j^{-1}g_r$. For $i\in [1,r+1]\setminus \{r,\,j\}$, set $A'_i=A_i$. Then, since neither $S_rg_r^{-1}g_j$ nor $S_jg_j^{-1}g_r$ is contained in a single $H_1$-coset, it follows from \autoref{lemma-generation} that $|\phi_{H_1}(A'_j)|\geq 2$ and $|\phi_{H_1}(A'_r)|\geq 2$. In consequence, the subset $\Sum{i=1}{r+1}A'_i\subseteq W\odot S$ intersects at least two $H_1$-cosets, one of which must be disjoint from the $H_1$-coset that contained $\Sum{i=1}{r+1}A_i$.

If $r\geq 3$, then there will be some $A_i=A'_i$ with $i\in [1,r-1]\setminus \{j\}$, which will be a cardinality $4$ subset of a single $H_1$-coset, thus ensuring that every $H_1$-coset that intersects $\Sum{i=1}{r+1}A'_i$ must contain at least $4$ elements. As a result, if $r\geq 3$, then $|W\odot S|\geq |H_1|+4$, as desired. Therefore it remains to consider the case when $r\leq 2$ in order to finish the case when $|I_1|=r$. However, \eqref{wheel} shows that $r\leq 2$ is only possible if $|H_1|=5$, $|S|=7$, $r=2$ and $j=1$. In this case, $|S|\equiv 1\mod 3$, so that $S_{r+1}$ contains a term from $S$. Since $\alpha_1+H_1=\alpha_j+H_1\neq \alpha_r+H_1=\alpha_2+H_1$, we can w.l.o.g. assume $\alpha_2+H_1\neq \alpha_3+H_1$, where $\alpha_3$ is the single term from $S_3$. But now, defining $A''_1=A_1\subseteq (0)(1)(2)\odot S_1$, $A''_2=(3)(4)\odot S_2g_2^{-1}$ and $A''_3=(5)(6)\odot S_rg_2$, we can repeat the  arguments from the $r\geq 3$ case using the $A''_i$ instead of the $A'_i$ in order to conclude $|W\odot S|\geq |H_1|+4$ in this final remaining case as well. So, for the remainder of the proof of Property 2, we can now assume $|I_1|\leq r-1$.

From Kneser's Theorem, \eqref{stunt}, the definitions of $I_1$ and $r$, and the assumption $|W\odot S|<|S|$, we have \be\label{threeone}|S|-1\geq |\Sum{i=1}{r}A_i|\geq (r-|I_1|+1)|H_1|=(\lfloor \frac{|S|+1}{3}\rfloor-|I_1|+1)|H_1|,\ee from which we derive both
$$|I_1|\geq \lfloor \frac{|S|+1}{3}\rfloor+1-\frac{|S|-1}{|H_1|}\geq (|S|-1)\frac{|H_1|-3}{3|H_1|}+1$$
 and
$|S|\geq (e+1)|H_1|+1$, where $e:=r-|I_1|\geq 1$. Combining these inequalities yields
\be\nn|I_1|\geq (e+1)\frac{|H_1|}{3}-e.\ee Since $|H_1|\geq 5$, the above bound is minimized for small $e$. Thus, since $e\geq 1$, we obtain \be\label{taufiL}|I_1|\geq \lceil \frac23 |H_1|\rceil -1,\ee which is at least the desired bound $\lceil\frac13(|H_1|-2)\rceil+2$ except when $|H_1|=6$. In this case, we must have $|S|=2|H_1|+1=13$ with $e=1$, else the estimate \eqref{taufiL} will become strict, yielding the desired bound on $|I_1|$. Thus $r=4$.

Since $|S|=13\equiv 1\mod 3$, the set $S_{r+1}$ contains a term from $S$, say $\alpha_{r+1}$. In view of \eqref{condition-list} and the definition of $I_1$, we know each $\supp(S_i)$, for $i\in I_1$, is contained in a single $H_1$-coset. If this single $H_1$-coset is equal to $\alpha_{r+1}+H$ for each $i\in I_1$, then we will have $3|I_1|+1=10\geq |H_1|+1$ terms of $S$ from the same $H_1$-coset, whence invoking Case 3 shows that the theorem holds for $S$, contrary to assumption. Therefore there must be some $j\in I_1$ such that $\supp(S_j)\subseteq \alpha_j+H_1\neq \alpha_{r+1}+H_1$, say w.l.o.g. $j=r$.  Set $A'_{i}=A_i$ for $i\in [1,r-1]$, set $A'_r=(9)(10)\odot S_rg^{-1}$ and set $A'_{r+1}=(11)(12)\odot S_{r+1}g$, where $g\in \supp(S_r)$. Observe that $\phi_{H_1}(A_i)=\phi_{H_1}(A'_i)$ for $i\in [1,r-1]$ while $|\phi_{H_1}(A_r)|=|\phi_{H_1}(A'_r)|=1$. Consequently, $\Sum{i=1}{r}\phi_H(A'_i)$ is a translate of $\Sum{i=1}{r}\phi_H(A_i)$; in particular, $\Sum{i=1}{r}\phi_{H_1}(A'_i)$ is aperiodic in view of $H_1$ being the maximal period of $\Sum{i=1}{r}A_i$. However, since $\supp(S_{r+1}g)$ is not contained in a single $H_1$-coset, it follows from \autoref{lemma-generation} that $|\phi_{H_1}(A'_{r+1})|\geq 2$, whence, since $\Sum{i=1}{r}\phi_{H_1}(A'_i)$ is aperiodic, Kneser's Theorem implies that $$|\Sum{i=1}{r+1}\phi_{H_1}(A'_i)|>|\Sum{i=1}{r}\phi_{H_1}(A'_i)|=
|\Sum{i=1}{r}\phi_{H_1}(A_i)|=|\Sum{i=1}{r+1}\phi_{H_1}(A_i)|.$$ Thus $\Sum{i=1}{r+1}A'_i\subseteq W\odot S$ intersects some $H_1$-coset that is disjoint from  $\Sum{i=1}{r+1}A_i\subseteq W\odot S$, which combined with \eqref{threeone} and the definition of $e$ implies that $$|S|>|W\odot S|>|\Sum{i=1}{r+1}A_i|=|\Sum{i=1}{r}A_i|\geq (e+1)|H_1|=12,$$ yielding the contradiction $|S|\geq 14$.
Thus Property 2 is established in the final remaining case.

\subsection*{Property 3:} Let $A_1\cdot\ldots\cdot A_rA_{r+1}$ be a well-balanced setpartition, let $K\leq G$ be a subgroup, let $J\subseteq [1,r]$ be a subset of indices with  $|\phi_K(A_i)|=1$ and $|A_i|=4$ for all $i\in J$, let $L=\mathsf H(\Summ{i\in J}A_i)$, and let $I\subseteq J$ denote all those indices $i\in J$ with $|\phi_L(A_i)|=1$.  If $|J|\geq \lceil\frac13(|K|-2)\rceil$ and $5\leq |L|<|K|$, then $|I|\geq \lceil\frac13(|L|-2)\rceil+2$.

\medskip

Since $|\phi_K(A_i)|=1$ for all $i\in J$, each $A_i$ with $i\in J$ is contained in a single $K$-coset, whence $\Summ{i\in J}A_i$ is also contained in a single $K$-coset. Thus $L\leq K$, so that our hypothesis $|L|<|K|$ implies $|L|\leq \frac12|K|$. In particular, $$|K|\geq 2|L|\geq 10.$$
Suppose by contradiction that $|I|\leq \lceil\frac13(|L|-2)\rceil+1\leq \frac13|L|+1$. For each $i\in J\setminus I$, we have $|\phi_L(A_i)|\geq 2$. Thus, in view of $L\neq K$, Kneser's Theorem implies that $|J\setminus I|=|J|-|I|\leq |K/L|-2$. Combined with our assumption on the size of $|I|$ and the hypothesis for the size of $|J|$, we find that
\ber \label{ish2}\left\lceil\frac{|K|-2}{3}\right\rceil-|K/L|+2\leq |I|\leq \left\lceil\frac{|L|-2}{3}\right\rceil+1,\eer which implies $\frac13|K|\leq \frac13|L|+|K/L|-\frac13$, in turn yielding \be\label{weq}|K|\leq |L|+\frac{3|K|}{|L|}-1.\ee
Considering the right hand side of \eqref{weq} as a function of $|L|$, we find that its maximum will be obtained for a boundary value of $|L|$, i.e., for $|L|=5$ or $|L|=\frac12|K|$. If $|L|=\frac 12 |K|$, we obtain $|K|\leq \frac{1}{2}|K|+5$, and if $|L|=5$, we obtain $|K|\leq \frac35 |K|+4$. In view of $|K|\geq 10$, both of these inequalities can only hold for $|K|=10$ with $|L|=5$
(in view of $|L|\geq 5$).
 However, for these values, we see that \eqref{ish2} instead implies $3-2+2\leq 2$, a contradiction. Thus Property 3 is established.

\bigskip

With the above three properties established for an arbitrary well-balanced setpartition $\mathscr A=A_1\cdot\ldots\cdot A_rA_{r+1}$, we now proceed to complete the proof by considering a well-balanced setpartition satisfying an iterated list of extremal conditions. The argument that follows is a simple variation of the basic strategy used to proof the Partition Theorem \cite{ccd}.
During the course of the construction of $\mathscr A$, we will at times declare certain quantities fixed, by which we mean that any additional assumption on $\mathscr A$ is always subject to all previously fixed quantities being maintained in their current state.


We begin by setting $J_1=[1,r]$, fixing $S_{r+1}$, and assuming our well-balanced setpartition $A_1\cdot\ldots\cdot A_rA_{r+1}$ has maximal cardinality sumset $|\Summ{i\in J_1}A_i|<|S|\leq |G|$ (in view of $|W\odot S|<|S|$). Fix $\Summ{i\in J_1}A_i$ up to translation. Let $H_1=\mathsf H(\Summ{i\in J_1}A_i)$ and $I_1$ be as defined above Property 2.

Next assume that $|I_1|$ is minimal (subject to all prior fixed quantities and extremal assumptions).
We showed above that  $H_1=\mathsf H(\Sum{i=1}{r}A_i)$ has $|H_1|\geq 5$, while Property 2 ensures that $|I_1|\geq \lceil\frac13(|H_1|-2)\rceil+2$.
We have $\langle A_i\rangle_*\subseteq H_1$ for all $i\in I_1$, whence \eqref{condition-list} ensures that $\langle \supp(S_i)\rangle_*\subseteq H_1$ for all $i\in I_1$. Thus each $\supp(S_i)$, for $i\in I_1$, is contained in some $H_1$-coset.
If it is the same $H_1$-coset for every $i\in I_1$, then we will have at least $3|I_1|-\epsilon\geq 3(\frac13(|H_1|-2)+2)-\epsilon\geq |H_1|+1$ terms of $S$ all from the same $H_1$-coset, whence Case 3 applied using the group $\langle \supp(\prod_{i\in I_1}S_i)\rangle_*\leq H_1<G$ shows that the theorem holds for $S$, contrary to assumption.
Therefore we may instead assume that there are distinct $k_1,\,k'_1\in I_1$ with $\supp(S_{k_1})$ and $\supp(S_{k'_1})$ contained in distinct $H_1$-cosets; moreover, if $|A_j|=2$ for some $j\in I_1$, then we can additionally assume $j\in \{k_1,\,k'_1\}$. Let $J_2=I_1\setminus \{k_1,\,k'_1\}$. Note $|A_i|=4$ for all $i\in J_2$.

Fix $S_i$ for all $i\in [1,r]\setminus J_2$, next assume that $|\Summ{i\in J_2}A_i|$ is maximal subject to all prior extremal assumptions still holding, and then fix $\Summ{i\in J_2}A_i$ up to translation.
In view of $|J_2|=|I_1|-2\geq \lceil\frac13(|H_1|-2)\rceil$ and $|H_1|\geq 5$, we see that $|J_2|$ is nonempty. Moreover, we have
\be\label{run} \Summ{i\in J_2}|A_i|-|J_2|+1=3|J_2|+1\geq |H_1|-1.\ee

Let us next show that $|\Summ{i\in J_2}A_i|<|H_1|-1$. Suppose this is not the case: $|\Summ{i\in J_2}A_i|\geq |H_1|-1$. Now $\supp(S_{k_1})$ and $\supp(S_{k'_1})$ are contained in disjoint $H_1$-cosets. Consequently, if we can swap a term between $S_{k_1}$ and $S_{k'_1}$ with the result giving a well-balanced setpartition satisfying all extremal assumptions coming before the assumption on $|\Summ{i\in J_2}A_i|$, then we will have contradicted the minimality of $|I_1|$. We proceed to do so.

Let $x\in \supp(S_{k_1})$ and let $y\in \supp(S_{k'_1})$.
If swapping the terms $x$ and $y$ does not result in a well-balanced factorization, then w.l.o.g.~we must have  $|S_{k_1}|=3$ with $\supp(S_{k_1}x^{-1}y)$ a coset of a cardinality $3$ subgroup (as argued in the existence of a well-balanced setpartition). However, in view of \autoref{punctured-lem}, this means that $\supp(S_{k_1}x^{-1}y')$ is not periodic, and thus not a coset of cardinality $3$ subgroup, for all other $y'\in \supp(S_{k'_1}y^{-1})$. Moreover, if $|\supp(S_{k'_1})|=3$, then \autoref{punctured-lem} also ensures that $S_{k'_1}x{y'}^{-1}$ cannot be a coset of a cardinality $3$ subgroup for both remaining terms $y'\in \supp(S_{k'_1}y^{-1})$. Thus, for any $x\in \supp(S_{k_1})$, we can find a $y\in \supp(S_{k'_1})$ such that swapping $x$ for $y$ results in a well-balanced factorization, thus inducing a well-balanced setpartition  where $A'_{k_1}\subseteq W_{k_1}\odot (A_{k_1}x^{-1}y)$ and $A'_{k'_1}\subseteq W_{k'_1}\odot(A_{k'_1}y^{-1}x)$ are obtained via \autoref{special-lemma} and have replaced $A_{k_1}$ and $A_{k'_1}$. Furthermore, either $|A'_{k_1}|=4$ or $|A'_{k'_1}|=4$, say $|A'_{k_1}|=4$, and then the construction of  $A'_{k_1}$ given by \autoref{special-lemma} allows us to assume there is a $2$ element subset of $A'_{k_1}$ contained in an $H_1$-coset.

Since $|\Summ{i\in J_2}A_i|\geq |H_1|-1$, \autoref{pigeonhole-lemma} implies that $\Summ{i\in I_1}A_i$ was a full $H_1$-coset (it cannot be larger as all sets $A_i$ with $i\in J_2\subseteq I_1$ are each themselves contained in an $H_1$-coset). However, since $A'_{k_1}$ still contains two elements from an $H_1$-coset, \autoref{pigeonhole-lemma} also ensures that $\Summ{i\in J_2}A_i+A_{k'_1}+A_{k'_2}$ contains a translate of this $H_1$-coset. Thus an appropriate translate of the sumset of the new setpartition contains all elements of $\Sum{i=1}{r}A_i$, whence the maximality of $|\Sum{i=1}{r}A_i|$ ensures that the sumset has not changed up to translation. Hence, since there are two less sets contained in a single $H_1$-coset in the new setpartition, we see that we have contradicted the minimality of $|I_1|$.
So we instead conclude that $|\Summ{i\in J_2}A_i|<|H_1|-1$, as claimed,  which, in view of \eqref{run}, implies that \be\label{wallyw}|\Summ{i\in J_2}A_i|<\min\{|H_1|,\,\Summ{i\in J_2}|A_i|-|J_2|+1\}.\ee

In view of \eqref{wallyw} and Property 1, we see that $H_2:=\mathsf (\Summ{i\in J_2}A_i)$ has $5\leq |H_2|<|H_1|$. Let $I_2\subseteq J_2$ be all those indices $i\in J_2$ with $|\phi_{H_2}(A_i)|=1$.  Assume $|I_2|$ is minimal (subject to all prior fixed quantities and extremal assumptions). Since $|J_2|=|I_1|-2\geq \lceil\frac13(|H_1|-2)\rceil$, we can apply Property 3 (with $L=H_2$ and $K=H_1$) to conclude  $|I_2|\geq \lceil\frac13(|H_2|-2)\rceil+2$. As before, all terms $A_i$ with $i\in I_2$ are contained in a single $H_2$-coset but not all in the same $H_2$-coset, else applying Case 3 shows that the theorem holds for $S$, contrary to assumption. This allows us to find $k_2,\,k'_2\in I_2$ such that $A_{k_2}$ and $A_{k'_2}$ are contained in disjoint $H_2$-cosets. Set $J_3=I_2\setminus \{k_2,\,k'_2\}$. Now fix all  $S_i$ for all $i\in [1,r]\setminus J_3$, next  assume that $|\Summ{i\in J_3}A_i|$ is maximal subject to all prior extremal assumptions still holding, and then fix $\Summ{i\in J_3}A_i$ up to translation. Repeating the above arguments, we again find that
$$|\Summ{i\in J_3}A_i|<\min\{|H_2|,\;\Summ{i\in J_3}|A_i|-|J_3|+1\}.$$ Thus Property 1 implies that  $H_3:=\mathsf (\Summ{i\in J_2}A_i)$ has $5\leq |H_3|<|H_2|$. Iterating the arguments of this paragraph, we obtain an infinite chain of subgroups $\infty>|G|>|H_1|>|H_2|>|H_3|>\ldots$, which is clearly impossible.
This contradiction completes the proof. (Essentially, the only way the above process terminates after a finite number of steps is when we find enough elements from the same proper coset, whence Case 3 shows that the theorem holds for $S$.)
\end{proof}

\section{Distinct Solutions to a Linear Congruence}
\label{4}

Let $r\in [2,n]$ and let $\alpha,a_1,\ldots,a_r\in\Z$. For each $x\in \Z$, we let $\overline{x}\in C_n$ denote $x$ reduced modulo $n$. Consider the linear congruence
$$a_1x_1+\ldots+a_rx_r\equiv\alpha\mod n.$$ Since the $a_i$ are allowed to be zero, there is no loss of generality to assume $r=n$ when studying the above congruence, in which case we have
\begin{equation}
\label{problem2}
a_1x_1+\ldots+a_nx_n\equiv\alpha\mod n.
\end{equation}
It is a simple and well-known result that there is a solution $(x_1,\ldots,x_n)\in \Z^n$ to \eqref{problem2}  precisely when $\alpha\in \gcd(a_1,\ldots,a_n,n)\Z$. It is less immediate when  a solution $(x_1,\ldots,x_n)$ with all $x_i$ distinct modulo $n$ exists. However,
noting  that the elements $a_1x_1+\ldots+a_nx_n$ having the $x_i$ distinct modulo $n$, when considered modulo $n$, are precisely the elements of $W\odot S$, where $W=0(1)\cdot\ldots (n-1)\in \Fc(\Z)$ and $S=\overline{a_1}\cdot\overline{a_2}\cdot\ldots\cdot \overline{a_n}\in \Fc(C_n)$, we then see that there existing a solution to \eqref{problem2} is equivalent to asking whether $\overline{\alpha}\in W\odot S$. If $n\geq 3$, then our main result \autoref{main} shows that $\overline{\alpha}\in W\odot S$ typically holds precisely when  \be\label{characterizaiton-general}\alpha\in \frac{(n-1)n}{2}a_1+\gcd(a_2-a_1,a_3-a_1,\ldots,a_n-a_1,n)\Z, \ee the only exception being when, for some distinct $j,\,k,\, l\in [1,n]$, we have $a_j-a_l\equiv-a_k+a_l\mod n$, \ $\gcd(a_j-a_l,n)=1$, and  $a_i\equiv a_l\mod n$ for all $i\in [1,n]\setminus\{j,k\}$, in which case $\overline{\alpha}\in W\odot S$ instead holds  precisely when \be\label{characterization-special}\alpha\in \frac{(n-1)n}{2}a_l+(\Z\setminus n\Z).\ee  Thus \autoref{main} characterizes when a solution $(x_1,\ldots,x_n)\in \Z^n$ to \eqref{problem2} exists having all $x_i$ distinct modulo $n$.

When $\alpha=1$, the congruence \eqref{problem2} becomes
\begin{equation}
\label{problem}
a_1x_1+\ldots+a_nx_n\equiv 1\mod n.
\end{equation}
Fairly recently, in \cite{pon10}, solutions to \eqref{problem} with all $x_i$ distinct modulo $n$ were constructed under the assumption that $\gcd(a_1,n)=\ldots=\gcd(a_k,n)=1$ and $a_{k+1}=\ldots=a_n=0$ for some $k<\varphi(n)$, where $\varphi(\cdot)$ denotes the Euler totient function. Additionally, \cite[Theorem 2]{pon10} proves the special case of \autoref{theo2} when $n$ is prime, and \autoref{theo2} generalizes \cite[Conjecture 3]{pon10}.

When $n=2$, there are essentially only three possible choices for $(a_1,a_2)$, namely $(0,0)$, $(0,1)$, and $(1,1)$. For $(0,0)$, there is no solution $(x_1,x_2)$ to \eqref{problem} with the $x_i$ distinct modulo $2$; for  $(0,1)$, there is a solution $(x_1,x_2)$ to \eqref{problem2} with the $x_i$ distinct modulo $2$ for all $\alpha$; and for $(1,1)$, there is  a solution $(x_1,x_2)$ to \eqref{problem} with the $x_i$ distinct modulo $2$ but no such solution to \eqref{problem2} for $\alpha=0$. The following result gives some special instances of the characterization given by \eqref{characterizaiton-general} and \eqref{characterization-special} for $n\geq 3$.

The first corollary addresses the question of when every $\alpha\in \Z$ has a solution $(x_1,\ldots,x_n)$ to \eqref{problem2} with the $x_i$ distinct modulo $n$.

\begin{corollary}
\label{theo1}
Let $n\geq 3$ and let $a_1,\ldots,a_n\in\Z$.
\begin{enumerate}
\item\label{theo1.1} If, for some  distinct $j,\,k,\, l\in [1,n]$, we have $a_j-a_l\equiv-a_k+a_l\mod n$, \ $\gcd(a_j-a_l,n)=1$, and  $a_i\equiv a_l\mod n$ for all $i\in [1,n]\setminus\{j,k\}$,  then
there is a solution $(x_1,\ldots,x_n)\in \Z^n$ to \eqref{problem} with the $x_i$ distinct modulo $n$ but there is some $\alpha\neq 1$ for which there is no solution $(x_1,\ldots,x_n)$ to \eqref{problem2} with all the $x_i$ distinct modulo $n$.
\item\label{theo1.2} Otherwise, the following are equivalent.
\begin{enumerate}
\item For every $\alpha\in\Z$, there is a solution $(x_1,\ldots,x_n)$ to \eqref{problem2} with the $x_i$ distinct modulo $n$.
\item For some $i\in [1,n]$, $\gcd(a_1-a_i,\ldots,a_n-a_i,n)=1$.
\end{enumerate}
\end{enumerate}
\end{corollary}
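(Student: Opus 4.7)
The plan is to invoke the reformulation recorded in the paragraph preceding the statement: a distinct-mod-$n$ solution $(x_1,\ldots,x_n) \in \Z^n$ to \eqref{problem2} exists if and only if $\overline{\alpha} \in W\odot S$, where $W = 0\cdot 1\cdots(n-1) \in \Fc(\Z)$ and $S = \overline{a_1}\cdot\overline{a_2}\cdots\overline{a_n} \in \Fc(C_n)$, together with the descriptions of $W\odot S$ already extracted from \autoref{main} in \eqref{characterizaiton-general} and \eqref{characterization-special}. Both parts of the corollary then amount to direct translations of these characterizations.

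For part (1), the hypothesis matches the exceptional case (ii) of \autoref{main} verbatim. Setting $g' := \overline{a_l}$ and $g := \overline{a_j - a_l}$, the assumption $a_i \equiv a_l \pmod n$ for $i \notin \{j,k\}$ and $a_k - a_l \equiv -(a_j - a_l) \pmod n$ yield $(-g' + S) = 0^{n-2}(g)(-g)$, while $\gcd(a_j - a_l, n) = 1$ forces $\ord(g) = n = |C_n|$. Consequently, by \eqref{characterization-special}, $W\odot S = C_n \setminus \{\overline{\tfrac{(n-1)n}{2}a_l}\}$ misses exactly one residue class in $C_n$. The concluding clause of \autoref{main}(ii) asserts that $W\odot S$ contains every generator of $C_n$, hence in particular $\overline{1} \in W\odot S$, giving the desired distinct-mod-$n$ solution to \eqref{problem}. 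For the second assertion, let $\alpha$ be any integer representative of the missed residue $\overline{\tfrac{(n-1)n}{2}a_l}$; by what was just said $\alpha \not\equiv 1 \pmod n$, so $\alpha \neq 1$, and \eqref{problem2} admits no distinct-mod-$n$ solution for this $\alpha$.

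For part (2), failure of the hypothesis of (1) puts us in the generic case of \eqref{characterizaiton-general}, so $W\odot S$ equals the coset $\overline{\tfrac{(n-1)n}{2}a_1} + H$ in $C_n$, where $H \leq C_n$ is the subgroup generated by $\overline{a_2 - a_1},\ldots,\overline{a_n - a_1}$, a subgroup of order $n/d$ for $d := \gcd(a_2 - a_1, \ldots, a_n - a_1, n)$. Thus (a), namely $W\odot S = C_n$, is equivalent to $d = 1$. For the equivalence with (b), the identity $a_r - a_i = (a_r - a_1) - (a_i - a_1)$ shows that $\gcd(a_1 - a_i, \ldots, a_n - a_i, n) = d$ for every $i \in [1,n]$, so (b) holds for some $i$ iff $d = 1$, yielding the equivalence.

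The bulk of the work underlying the corollary, namely the passage from \autoref{main} to the characterizations \eqref{characterizaiton-general} and \eqref{characterization-special}, is handled in the preceding discussion; the remaining steps here are bookkeeping. The only minor fine point is to observe that the structural case (i) of \autoref{main}, with $G \cong C_2 \oplus C_2$, never interferes, because $G = C_n$ is cyclic throughout.
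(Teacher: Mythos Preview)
Your proposal is correct and follows the same approach as the paper: both parts are read off directly from the characterizations \eqref{characterizaiton-general} and \eqref{characterization-special} derived from \autoref{main}, together with the observation that $\gcd(a_1-a_i,\ldots,a_n-a_i,n)$ is independent of $i$. You have simply spelled out more of the bookkeeping than the paper does (in particular the use of the ``contains every generator'' clause of \autoref{main}(ii) to secure $\overline{1}\in W\odot S$, and the explicit exclusion of the $C_2\oplus C_2$ case).
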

\begin{proof}
Noting that $\gcd(a_1-a_i,\ldots,a_n-a_i,n)=\gcd(a_1-a_j,\ldots,a_n-a_j,n)$ for all $i,\,j\in [1,n]$, it follows that these are both simple consequences of \eqref{characterization-special} and \eqref{characterizaiton-general}.
%
\qedhere
\end{proof}


The next result addresses the question of when \eqref{problem} has a solution $(x_1,\ldots,x_n)$ with the $x_i$ distinct modulo $n$. We remark that the arguments used below for $\alpha=1$ would actually work for any $\alpha\in \Z$ with $\gcd(\alpha,n)=1$.

\begin{theorem}
\label{theo2}Let $n\geq 2$ and let $a_1,\ldots,a_n\in\Z$.
\begin{enumerate}
 \item If $n$ is odd or some $a_i$ is even, then \eqref{problem} has a solution $(x_1,\ldots,x_n)$ with the $x_i$ distinct modulo $n$ if and only if $\gcd(a_2-a_1,a_3-a_1,\ldots,a_n-a_1,n)=1$.
     \item If $n\equiv 0\mod 4$ and all $a_i$ are odd, then \eqref{problem} has no solution $(x_1,\ldots,x_n)$ with the $x_i$ distinct modulo $n$.
         \item If $n\equiv 2\mod 4$ and all $a_i$ are odd, then \eqref{problem} has a solution $(x_1,\ldots,x_n)$ with the $x_i$ distinct modulo $n$ if and only if $\gcd(a_2-a_1,a_3-a_1,\ldots,a_n-a_1,n)=2$.
\end{enumerate}
\end{theorem}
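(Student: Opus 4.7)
The plan is to deduce \autoref{theo2} from the characterization of $W\odot S$ already used for \autoref{theo1}: a solution to \eqref{problem} with the $x_i$ distinct modulo $n$ exists if and only if $\overline{1}\in W\odot S$, where $W=0\cdot 1\cdots(n-1)\in\Fc(\Z)$ and $S=\overline{a_1}\cdots\overline{a_n}\in\Fc(C_n)$. Set $d:=\gcd(a_2-a_1,\ldots,a_n-a_1,n)$ and $s:=\frac{(n-1)n}{2}$. For $n\geq 3$, by \eqref{characterizaiton-general} and \eqref{characterization-special}, the condition $\overline{1}\in W\odot S$ is equivalent to $sa_1\equiv 1\mod d$ in the non-exceptional case and to $sa_l\not\equiv 1\mod n$ in the exceptional case; the case $n=2$ is handled by a direct enumeration of parities of $(a_1,a_2)$, consistent with Parts (1) and (3).

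First I would dispatch the exceptional case of \eqref{characterization-special}. In Parts (2) and (3), every $a_i$ is odd and $n$ is even, so each $a_j-a_l$ is even and $\gcd(a_j-a_l,n)\geq 2$; hence the exceptional case never occurs there. In Part (1), if the exceptional case does occur, then $d\mid\gcd(a_j-a_l,n)=1$ already forces $d=1$, and a short check of $sa_l$ modulo $n$ (using $s\equiv 0\mod n$ for $n$ odd, and $s\equiv n/2\mod n$ for $n$ even together with a parity split on $a_l$) shows $sa_l\not\equiv 1\mod n$ for $n\geq 3$. So a solution exists whenever the exceptional case holds, consistent with the assertion that a solution exists iff $d=1$.

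Next, for the non-exceptional case of Part (1), I would reduce $sa_1\equiv 1\mod d$ to $d=1$ by showing $sa_1\equiv 0\mod n$. If $n$ is odd, $n\mid s$ directly. If $n$ is even but some $a_i$ is even, the congruence $sa_j\equiv 1\mod d$ is independent of $j$ (since $d$ divides every $a_i-a_j$), so I may replace $a_1$ with the even $a_i$; then $sa_1$ is $s$ times an even integer, hence a multiple of $n$. For Part (2), $4\mid n$ makes $n/2$ even, so $s=\frac{n}{2}(n-1)$ is even, $sa_1$ is even, and $1-sa_1$ is odd; but $d$ is even (all $a_i-a_j$ are even and $n$ is even), so $d\mid 1-sa_1$ is impossible, proving no distinct-mod-$n$ solution exists.

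Part (3) requires the most care and is the step I expect to be the main technical obstacle, though only in terms of $2$-adic bookkeeping. Here $n\equiv 2\mod 4$ makes $n/2$ odd and hence $s=\frac{n}{2}(n-1)$ odd. A short computation (using $a_1$ odd) gives $sa_1\equiv n/2\mod n$; writing $n=2m$ and $d=2d'$ with $m$ odd forces $d'$ odd (as a divisor of $m$), hence $n/d=m/d'$ is odd, giving $n/2=(n/d)(d/2)\equiv d/2\mod d$. Thus the condition $1\equiv sa_1\equiv d/2\mod d$ combined with $d\mid n$ yields $d\mid 2-d$, i.e., $d\mid 2$, forcing $d=2$. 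Conversely, $d=2$ trivially yields $sa_1\equiv 1\mod 2$ since both sides are odd. No deeper obstacle arises, since \autoref{main} has already reduced the entire question to this divisibility check.
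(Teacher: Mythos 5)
Your proposal is correct and follows essentially the same route as the paper: both deduce all three parts from the characterization \eqref{characterizaiton-general}/\eqref{characterization-special} supplied by \autoref{main}, dispose of the exceptional case by the parity observation that $2\mid\gcd(a_j-a_l,n)$ when all $a_i$ are odd and $n$ is even, reduce Part 1 to $\frac{(n-1)n}{2}a_1\equiv 0\bmod n$ after re-indexing to an even $a_i$, obtain Part 2 from the parity contradiction $d$ even versus $1-\frac{(n-1)n}{2}a_1$ odd, and reduce Part 3 to the divisibility $d\mid(\frac{n}{2}-1)$ with $d$ even. The only difference is the cosmetic final step of Part 3, where the paper uses the coprimality of $x$ and $\frac{x-1}{2}$ with $x=\frac{n}{2}$ while you compute $\frac{n}{2}\equiv\frac{d}{2}\bmod d$ directly; both are valid.
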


\begin{proof}

That the theorem holds for $n=2$ can be easily checked, so we assume $n\geq 3$.

1. If the $a_i$ satisfy the hypothesis of \autoref{theo1}.1, then $\gcd(a_2-a_1,a_3-a_1,\ldots,a_n-a_1,n)=1$ and \autoref{theo1}.1 shows that \eqref{problem} has a solution. Therefore assume the $a_i$ do not satisfy the hypothesis of \autoref{theo1}.1. If $n$ is odd, then $\frac{(n-1)n}{2}a_1\equiv 0\mod n$, whence \eqref{characterizaiton-general} shows that \eqref{problem} has a solution $(x_1,\ldots,x_n)$ with the $x_i$ distinct modulo $n$ if and only if $\gcd(a_2-a_1,a_3-a_1,\ldots,a_n-a_1,n)=1$. If some $a_i$ is even, then we may w.l.o.g. re-index so that $a_1$ is even, whence $\frac{(n-1)n}{2}a_1\equiv 0\mod n$ again holds, completing the proof as before.

2. Since the $a_i$ are odd and $n$ is even, we have $2\mid \gcd(a_2-a_1,a_3-a_1,\ldots,a_n-a_1,n)$, in which case the hypotheses of \autoref{theo1}.1 cannot hold for the $a_i$. Additionally, since $4\mid n$, we have $2\mid \frac{(n-1)n}{2}a_1$ as well, whence $$\frac{(n-1)n}{2}a_1+\gcd(a_2-a_1,a_3-a_1,\ldots,a_n-a_1,n)\Z\subseteq 2\Z$$ and thus cannot contain $1$. Hence \eqref{characterizaiton-general} shows that \eqref{problem} has no solution $(x_1,\ldots,x_n)$ with the $x_i$ distinct modulo $n$.

3. As was the case in part 2, we have $2\mid \gcd(a_2-a_1,a_3-a_1,\ldots,a_n-a_1,n)$, so that the hypotheses of \autoref{theo1}.1 cannot hold for the $a_i$. Since $n\equiv 2\mod 4$ and $a_1$ is odd, we have $\frac{(n-1)n}{2}a_1\equiv\frac{n}{2}\mod n$. Thus \eqref{characterizaiton-general} shows that \eqref{problem} has a solution $(x_1,\ldots,x_n)$ with the $x_i$ distinct modulo $n$ if and only if $\frac{n}{2}-1\in \gcd(a_2-a_1,a_3-a_1,\ldots,a_n-a_1,n)\Z$. This condition rephrases as $\gcd(a_2-a_1,a_3-a_1,\ldots,a_n-a_1,n)\mid (\frac{n}{2}-1)$, which further rephrases as
\be\label{whoosh}\gcd(\frac{a_2-a_1}{2},\frac{a_3-a_1}{2},\ldots,\frac{a_n-a_1}{2},\frac{n}{2})\left| \frac{n-2}{4}.\right.\ee

If there were a common factor $p\geq 2$ dividing both $x$ and $\frac{x-1}{2}$, where $x\in \Z^+$, then $py=\frac{x-1}{2}$ and $pz=x$ for some positive integers $y,\,z\in\Z$, whence $2yp+1=x=pz$ follows, implying $p(z-2y)=1$, which contradicts that $p\geq 2$. Thus the integers $x$ and $\frac{x-1}{2}$ can share no common factors. Applying this observation with $x=\frac{n}{2}$, we see that \eqref{whoosh} holds precisely when $\gcd(\frac{a_2-a_1}{2},\frac{a_3-a_1}{2},\ldots,\frac{a_n-a_1}{2},\frac{n}{2})=1$, which is equivalent to $\gcd(a_2-a_1,a_3-a_1,\ldots,a_n-a_1,n)=2$. This completes the final part of the theorem.
\end{proof}

\section{Consequences for Minimal Zero-Sum Sequences}

We briefly recall the structure of minimal zero-sum sequences of maximal length in groups of rank $2$.
The following result was first shown as a conditional result in \cite[Theorem 3.2]{propB-3}, but by \cite{MR1985670}, \cite{propB-4}, \cite{propB-5} and \cite{propB-2}, the condition is satisfied.

\begin{lemma}[cf. {\cite[Theorem 3.2]{propB-3}}]
\label{inverse}
Let $G$ be a finite abelian group of rank two, say $G\cong C_m\oplus C_{mn}$ with $m,\,n\in\N$ and $m\geq 2$. The  minimal zero-sum sequences of maximal length are of the following forms.
\begin{enumerate}
 \item $S=e_j^{\ord e_j-1}\prod_{i=1}^{\ord e_k}(x_ie_j+e_k)$, where $\lbrace e_1,e_2\rbrace$ is a basis of $G$ with $\ord e_2=mn$, $\lbrace j,k\rbrace=\lbrace 1,2\rbrace$, and $x_i\in\N_0$ with $\sum_{i=1}^{\ord e_k}x_i\equiv 1\mod\ord e_j$.
 \item $S=g_1^{sm-1}\prod_{i=1}^{(n+1-s)m}(x_ig_1+g_2)$, where $s\in [1,n]$, $\lbrace g_1,g_2\rbrace$ is a generating set of $G$ with $\ord g_2=mn$ and, in case $s\neq 1$, $mg_1=mg_2$ and $x_i\in\N_0$ with $\sum_{i=1}^{(n+1-s)m}x_i=m(n(n+1-s)-1)+1$.
\end{enumerate}
\end{lemma}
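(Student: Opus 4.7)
The plan is to deduce this lemma by combining two ingredients that already exist in the literature, rather than by attempting a fresh structural analysis. The first ingredient is the conditional characterization of Gao and Geroldinger from \cite[Theorem 3.2]{propB-3}: they establish that every minimal zero-sum sequence of maximal length in a rank-$2$ group $G \cong C_m \oplus C_{mn}$ must have exactly the two forms (1) and (2) listed above, \emph{provided} the following statement (``Property B'') is known to hold: for every $m' \geq 2$ and every minimal zero-sum sequence $T$ over $C_{m'} \oplus C_{m'}$ with $|T| = \mathsf D(C_{m'} \oplus C_{m'}) = 2m' - 1$, some element appears in $T$ with multiplicity at least $m' - 1$. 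The second ingredient is the now-completed verification of Property B. So the outline of my proof would read: ``By \cite[Theorem 3.2]{propB-3}, it suffices to verify Property B for all $m' \geq 2$. This follows by combining \cite{MR1985670} (the prime case), \cite{propB-4,propB-5} (various composite cases), and \cite{propB-2} (the remaining cases).''

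If one were to reconstruct the conditional part of the argument for transparency, the key steps would be as follows. First, reduce to classifying the maximal-length minimal zero-sum sequences of $G$ by identifying an element $g$ of maximal multiplicity in $S$ and using Property B (applied after projecting to a suitable $C_m \oplus C_m$ subquotient) to conclude $\vp_g(S) \geq$ some threshold. Second, normalize $g$ to play the role of $e_j$ (in case (1)) or $g_1$ (in case (2)), distinguishing the two cases according to whether $\la g \ra$ is a direct summand of $G$ of order $mn$ or not. Third, write the remaining terms of $S$ in the shape $x_i e_j + e_k$ (respectively $x_i g_1 + g_2$), and extract the stated congruence / identity on $\sum x_i$ from the condition $\sigma(S) = 0$ together with the fact that no proper subsequence sums to zero. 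In case (2), the additional equation $mg_1 = mg_2$ when $s \neq 1$ comes out of the same minimality analysis, because otherwise one finds a short proper zero-sum subsequence.

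The main obstacle is, of course, Property B itself, which is the substantive content collectively established across the cited sequence of papers; no new work is required in the present paper. The step-by-step reconstruction above is not something I would reprove in detail -- I would simply cite \cite[Theorem 3.2]{propB-3} for the implication and the four references \cite{MR1985670,propB-2,propB-4,propB-5} for the hypothesis, exactly as the preceding paragraph of the paper already telegraphs.
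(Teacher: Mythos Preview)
Your proposal is correct and matches the paper's approach exactly: the paper provides no proof of this lemma at all, instead stating it as a known result and noting (in the sentence immediately preceding the lemma) that it was proved conditionally in \cite[Theorem 3.2]{propB-3} with the condition subsequently verified by \cite{MR1985670}, \cite{propB-4}, \cite{propB-5}, and \cite{propB-2}. Your plan to simply cite these references is precisely what the paper does.
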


In the second case of \autoref{inverse}, the coefficients $x_i$ are determined by equations only. But in the first case of \autoref{inverse}, the coefficients $x_i$ are determined by a congruence. Now suppose we are in case 1 and let $G$ and $S$ be as in \autoref{inverse}. Then we may write $S$ in the form
\[
S=e_j^{\ord e_j-1}\prod_{i=1}^{l}(x_ie_j+e_k)^{a_i},
\]
where $\lbrace e_1,e_2\rbrace$ is a basis of $G$, $\ord e_1=m$, $\ord e_2=mn$, $\lbrace j,k\rbrace=\lbrace 1,2\rbrace$, $l\in [1,\ord e_j]$, $a_1,\ldots,a_l\in\N$ with $a_1+\ldots+a_l=\ord e_k$, $x_1,\ldots,x_l\in [0,\ord e_j-1]$ and all the $x_i$ are distinct.
Note $a_1+\ldots+a_l=\ord e_k$ with each $a_i\geq 1$ implies that $l\leq \ord e_k$. Thus the characterization given by \autoref{inverse}.1 easily implies that $|\supp(S)|\in [3,\min\{l,\ord e_j\}]=[3,m+1]$ (we cannot have $|\supp(S)|=2$, as then all $x_i$ from \autoref{inverse}.1 would be equal modulo $\ord e_j$, in which case the congruence $x_1+\ldots x_{\ord e_k}\equiv 1\mod \ord e_j$ could not hold).
\smallskip

Here, we consider $\ord e_j-1,a_1,\ldots,a_l$ as a multiplicity pattern of the elements arising in $S$. Thus two natural questions appear:
\begin{itemize}
\item Which multiplicity patterns can occur?
\item How big can the support of $S$ be?
\smallskip
\end{itemize}

We use the main result from \autoref{4} to answer these questions. In particular, we will show that any value of $[3,m+1]$ can be achieved for $|\supp(S)|$, apart from $m+1$ when $n=1$ and $m\geq 3$, which, at least in the case $n=1$, was originally shown in \cite[Proposition 5.8.5]{alfredbook}.
First we set $a_i=0$ for $i\in [l+1,\ord e_j]$, choose $x_{l+1},\ldots,x_{\ord e_j}\in [0,\ord e_j-1]$ such that all $x_i$ are distinct, and obtain
\begin{align}
\label{congruB}
a_1x_1+\ldots+a_{\ord e_j}x_{\ord e_j}&\equiv 1\mod {\ord e_j}\und\\
\label{equB}
a_1+\ldots+a_{\ord e_j}&={\ord e_k}.
\end{align}
Now there are three possible cases depending on $\ord{e_j}$ and $\ord{e_k}$.
\smallskip\\
\textbf{Case 1.}
$\ord e_j=\ord e_k$, i.e. $n=1$ and $\ord e_j=\ord e_k=m$.
Then if  equation \eqref{equB} is satisfied, we must have either $a_1=\ldots=a_{\ord e_j}=1$ or $a_{\ord e_j}=0$. Now we apply \autoref{theo2} and find that there is only a solution to \eqref{congruB} in the first case when $m=2$, whence $|\supp(S)|=m+1$ is only possible when $m=2$, and that, in the second case, there is a solution to \eqref{congruB} for all choices of $a_1,\ldots,a_l$ with $\gcd(a_1,\ldots,a_l,{\ord e_j})=1\mod n$, where $1<l<{\ord e_j}=m$. In particular, taking the sequence $1^{l-1}(\ord e_j-l+1)0^{\ord e_j-l}$ for $a_1a_2\cdot\ldots\cdot a_{\ord e_j}$, where $l\in [2,\ord e_j-1]$, shows that any value of $|\supp(S)|\in [3,\ord e_j]=[3,m]$ is possible.
\smallskip\\
\textbf{Case 2.}
$\ord e_k<\ord e_j$, i.e., $\ord e_k=m$ and $\ord e_j=mn\geq 4$ with $m,\,n\geq 2$.
Then \eqref{equB} forces $a_{\ord e_j}=0$. Again we apply \autoref{theo2} and find that there is a solution to \eqref{congruB} for all choices of $a_1,\ldots,a_l$ with $\gcd(a_1,\ldots,a_l,{\ord e_j})=1\mod n$, where $1<l\leq\ord e_k<\ord e_j$.
In particular, taking the sequence $1^{l-1}(\ord e_k-l+1)0^{\ord e_j-l}$ for $a_1a_2\cdot\ldots\cdot a_{\ord e_j}$, where $l\in [2,\ord e_k]\subset [2,\ord e_j]$, shows that any value of  $|\supp(S)|\in[3,\ord e_k+1]=[3,m+1]$ is possible.
\smallskip\\
\textbf{Case 3.}
$\ord e_j<\ord e_k$, i.e., $\ord e_j=m$ and $\ord e_k=mn$. If $m=2$, then \eqref{congruB} has a solution provided $a_1$ and $a_2$ are both odd.
For $m\geq 3$, we apply \autoref{theo2} and obtain the following. The condition
\be\label{stingpin}\gcd(a_2-a_1,a_3-a_1\ldots,a_{\ord e_j}-a_1,\ord e_j)\leq 2\ee must always be fulfilled if \eqref{congruB} is to have a solution. Moreover, if $m$ is odd or some $a_i$ is even, then we must also have the inequality in \eqref{stingpin} being strict, while if $4\mid m$ and all $a_i$ are odd, then no solution to \eqref{congruB} can be found. In particular,
taking the sequence $1^{l-1}(\ord e_k-l+1)0^{\ord e_j-l}$ for $a_1a_2\cdot\ldots\cdot a_{\ord e_j}$, where $l\in [2,\ord e_j-1]=[1,m-1]$, shows that any value of  $|\supp(S)|\in[3,m]$ is possible. For $m\geq 3$, taking the sequence $1^{m-2}(2)(mn-m)$ for $a_1a_2\cdot\ldots\cdot a_{\ord e_j}$ shows that the value  $|\supp(S)|=m+1$ is also possible. Taking $(mn-1)(1)$ for $a_1a_2$ when $m=2$ also shows that $|\supp(S)|=m+1=3$ is possible when $m=2$.

\medskip

Note that, for groups of the form $G\cong C_m\oplus C_m$, all minimal zero-sum sequences of maximal length are of the form $S=e_1^{m-1}\prod_{i=1}^m(x_ie_1+e_2)$, where $\lbrace e_1,e_2\rbrace$ is a basis of $G$ with $\ord e_1=\ord e_2=m$ and $x_i\in\N_0$ with $\sum_{i=1}^mx_i\equiv 1\mod m$. In this situation, only Case 1 appears.


\begin{thebibliography}{99}

\bibitem{pon10}D. G. Adams and V. Ponomarenko, Distinct Solutions to a Linear Congruence, \emph{Involve} 3 (2010), no. 3, 345--347.

\bibitem{adhi0}S. D. Adhikari and P. Rath, Davenport constant with
weights and some related questions, \emph{Integers} 6 (2006), A 30 (electronic).

\bibitem{adhi1}S. D. Adhikari and Y. G. Chen, Davenport constant with
weights and some related questions II, \emph{J. Combin. Theory Ser. A} 115
(2008), no. 1, 178--184.

\bibitem{adhi2}S. D. Adhikari, R. Balasubramanian, F. Pappalardi
and P. Rath, Some zero-sum constants with weights, \emph{Proc. Indian
Acad. Sci. (Math. Sci)}, 118 (2008), no. 2, 183--188.

\bibitem{adhi3}S. D. Adhikari, Y. G. Chen, J. B. Friedlander, S. V. Konyagin and F. Pappalardi,
 Contributions to zero-sum problems, \emph{Discrete Math.}, 306 (2006),
 1--10.

\bibitem{adhi4} S. D. Adhikari, C. David and J. Jimenez,
Generalization of some zero-sum theorems, preprint.

\bibitem{RS-4} N. Alon, M. B. Nathanson, and I. Ruzsa, The polynomial method and restricted sums of congruence classes, \emph{J. Number Theory}, 56 (1996), no. 2, 404–-417.

\bibitem{olsonconstant2} G. Bhowmik, J. C. Schlage-Puchta,  An improvement on Olson's constant for $\Z_p\oplus\Z_p$, \emph{Acta Arith.}, 141 (2010), no. 4, 311–-319.

\bibitem{propB-5} G. Bhowmik, I. Halupczok, and J. Schlage-Puchta, The structure of maximal zero-sum free sequences, \emph{Acta Arith.}, 143
(2010), no. 1, 21--50.

\bibitem{bialostocki} A. Bialostocki, CANT2005 conference in
combinatorial and additive number theory, New York (2005).

\bibitem{egzII} A. Bialostocki, P.  Dierker, D. J. Grynkiewicz, and M. Lotspeich, On some developments of the Erd\H{o}s-Ginzburg-Ziv theorem II, \emph{Acta Arith.}, 110 (2003), no. 2, 173–-184.

\bibitem{devos-quad-bound} M. DeVos, L. Goddyn, B. Mohar, and R.  \^{S}\'{a}mal,  A quadratic lower bound for subset sums, \emph{Acta Arith.}, 129 (2007), no. 2, 187-–195.

\bibitem{RS-3} J. A. Dias da Silva and Y. O. Hamidoune,  Cyclic spaces for Grassmann derivatives and additive theory, \emph{Bull. London Math. Soc.}, 26 (1994), no. 2, 140–-146.

\bibitem{egz} P. Erd\H os, A. Ginzburg and A. Ziv, Theorem in Additive
Number Theory,  \emph{Bull. Res. Council Israel}, 10F
(1961), 41--43.

\bibitem{crit-number} M. Freeze, W. D.  Gao, and A.  Geroldinger, The critical number of finite abelian groups, \emph{J. Number Theory}, 129 (2009), no. 11, 2766-–2777.

\bibitem{MR1985670} W. Gao and A. Geroldinger, On zero-sum sequences in {$\mathbb Z/n\mathbb Z\oplus\mathbb Z/n\mathbb Z$}, \emph{Integers}, 3 (2003), no. A8, 45 pp. (electronic).

\bibitem{GaoGer-survey} W. Gao and A. Geroldinger, Zero-sum problems in finite abelian groups: A survey, \emph{Expositiones Mathematicae}, 24 (2006), no. 4, 337--369.

\bibitem{olsonconstant} W. D. Gao, I. Z. Ruzsa, and R. Thangadurai, Olson's constant for the group ${\mathbb Z}_p\oplus{\mathbb Z}_p$, \emph{J. Combin. Theory Ser. A}, 107 (2004), no. 1, 49–-67.

\bibitem{propB-2} W. D. Gao, A.  Geroldinger, and D. J.  Grynkiewicz,
Inverse zero-sum problems III,
\emph{Acta Arith.}, 141 (2010), no. 2, 103--152.

\bibitem{gao-wegz-partialcase} W. Gao and W. Jin, Weighted sums in finite cyclic groups, \emph{Discrete Math.},  283  (2004),  no. 1-3, 243--247.

\bibitem{Alfred-BCN-notes} A. Geroldinger, Additive group theory and non-unique factorizations,
\emph{Combinatorial Number Theory and Additive Group Theory},
Advanced Courses in Mathematics, eds. A. Geroldinger and I. Ruzsa,
Birkh\"auser, CRM Barcelona, 2009, 1--89.

\bibitem{alfredbook} A. Geroldinger and F. Hatler-Koch, \emph{Non-Unique Factorizations: Algebraic, Combinatorial and Analytic Theory}, Pure and Applied Mathematics: A series of Monographs and Textbooks, No. 278, Chapman \& Hall, an imprint of Taylor \& Francis Group, Boca Raton, FL (2006).

\bibitem{griffiths} S. Griffiths, The Erdos-Ginzurg-Ziv theorem with units, \emph{Discrete Math.},
308  (2008),  no. 23, 5473--5484.

\bibitem{ccd} D. J. Grynkiewicz,
On a Partition Analog of the Cauchy-Davenport Theorem, \emph{Acta
Math. Hungar.}, 107 (2005), no. 1--2, 161--174.

\bibitem{wegz} D. J. Grynkiewicz,  A weighted Erd\"{o}s-Ginzburg-Ziv theorem,
\emph{Combinatorica}, 26 (2006), no. 4, 445–-453.

\bibitem{quasi-kemp} D. J. Grynkiewicz, Quasi-periodic decompositions and the Kemperman structure theorem, \emph{European J. Combin.}, 26 (2005), no. 5, 559–-575.

\bibitem{ham-conj} D. J. Grynkiewicz, On a conjecture of Hamidoune for subsequence sums, \emph{Integers}, 5 (2005), no. 2, A7, 11 pp. (electronic).


\bibitem{phd-thesis} D. J. Grynkiewicz, \emph{Sumsets, Zero-Sums and Extremal Combinatorics}, Ph.D. Dissertation, Caltech, August 2005.

\bibitem{zhuang-collab} D. J. Grynkiewicz and J. Zhuang,
Weighted Sequences in Finite Cyclic Groups,
\emph{Applied Mathematics E-Notes}, 9 (2009), 40--46.

\bibitem{oscar-weighted-projectI} D. J. Grynkiewicz, L. E. Marchan and O. Ordaz,
Representation of finite abelian group elements by subsequence
sums, to appear in \emph{ J. Th\'{e}or. Nombres Bordeaux}.


\bibitem{ap-related} S. Guo and Z. W. Sun, On Bialostocki's Conjecture for Zero-Sum Sequences, \emph{Acta Arith.} 140 (2009), no. 4, 329-–334.

\bibitem{ham-D_A-paper}Y. Hamidoune, A weighted Generalization of Gao's
$n+D-1$ Theorem, \emph{Combin. Probab. Comput.}, 17 (2008), no. 6, 1--6.

\bibitem{hamweightegzgroup} Y. O. Hamidoune, On weighted sequence
sums, \emph{Comb. Prob. Comput.}, 4 (1995), 363--367.

\bibitem{hamweightsrelprime} Y. O. Hamidoune, On weighted sums in
abelian groups, \emph{Discrete Math.}, 162 (1996), 127--132.


\bibitem{RS-2} G. K\'{a}rolyi,  The Erd\H{o}s-Heilbronn problem in abelian groups, \emph{Israel J. Math.}, 139 (2004), 349–-359.

\bibitem{kt}
M.~Kneser, Ein Satz \"{u}ber abelsche Gruppen mit Anwendungen auf
die Geometrie der Zahlen, \emph{Math. Z.}, 64 (1955),
429--434.

\bibitem{RS-5} V. F. Lev, Restricted set addition in groups I: The classical setting, \emph{J. London Math. Soc.} (2) 62 (2000), no. 1, 27–-40.

\bibitem{luca} F. Luca, A generalization of a classical zero-sum problem,
\emph{Discrete Math.} 307 (2007), 1672--1678.

\bibitem{natbook} M. B. Nathanson, \emph{Additive Number Theory: Inverse Problems and the
Geometry of Sumsets}, Springer-Verlag, Harrisonburg, VA (1996).

\bibitem{vu-complete} H. H. Nguyen, E. Szemer\'edi, and V. H. Vu,
Subset sums modulo a prime,
\emph{Acta Arith.}, 131 (2008), no. 4, 303–-316.


\bibitem{olsonsd} O. Ordaz, A. Phillipp, I. Santos and W. A. Schmid, On the Olson and the Strong Davenport Constants, \emph{J. Th\'eor. Nombres Bordeaux}, to appear.

\bibitem{ordaz-quiroz-cyliccase} O. Ordaz and D. Quiroz,
Representation of group elements as subsequences sums, \emph{Discrete Mathematics}, 308 (2008), no. 15, 3315--3321.


\bibitem{RS-1} H. Pan and Z. W. Sun,  Restricted sumsets and a conjecture of Lev, \emph{Israel J. Math.}, 154 (2006), 21-–28.

\bibitem{RS-6} H. Pan and Z. W.  Sun,  A new extension of the Erdo"s-Heilbronn conjecture, \emph{J. Combin. Theory Ser. A}, 116 (2009), no. 8, 1374-–1381.

\bibitem{kemnitz} C. Reiher,  On Kemnitz' conjecture concerning lattice-points in the plane, \emph{Ramanujan J.}, 13 (2007), no. 1--3, 333-–337.

\bibitem{propB-4} C. Reiher, \emph{A proof of the theorem according to which every prime number posseses Property \protect{$B$}}, Ph.D. Dissertation, University of Rostock (2010).

\bibitem{Rogers-Davenport} K. Rogers, A combinatorial problem in abelian groups, \emph{Proc. Cambridge Philos. Soc.}, 59 (1963), 559--562.


\bibitem{propB-3} W. A. Schmid,  Inverse zero-sum problems II,  \emph{Acta Arith.},  143  (2010),  no. 4, 333--343.

\bibitem{thanga-paper}R. Thangadurai, A variant of Davenport's constant,
\emph{Proc. Indian Acad. Sci. (Math. Sci)}, 117. (2007), no. 2, 147--158.

\bibitem{xia}Xingwu Xia, Two generalized constants related to
zero-sum problems for two special sets, \emph{Integers}, 7 (2007) (electronic).

\bibitem{weight-Gao-cyclic} P. Yuan and X. Zeng, Davenport constant with weights,
Davenport constant with weights, \emph{European J. Combin.}, 31 (2010), no. 3, 677–-680.

\bibitem{homo-weight} P. Yuan and X. Zeng, Weighted Davenport's Constant and Weighted EGZ Theorem, preprint.

\end{thebibliography}
\end{document}